\documentclass[12pt]{article}
\usepackage{latexsym, amsthm, amsmath, amssymb}
\addtolength{\textheight}{1cm}
\addtolength{\textwidth}{1cm}
\addtolength{\topmargin}{-1cm}
\newcommand{\R}{\mathbb{R}}
\newcommand{\N}{\mathbb{N}}
\newcommand{\No}{\mathbb{N}_0}
\newcommand{\Z}{\mathbb{Z}}
\newcommand{\Q}{\mathbb{Q}}
\newcommand{\C}{\mathbb{C}}

\newtheorem{lemma}{Lemma}[section]

\newtheorem{theorem}[lemma]{Theorem}
\newtheorem{corollary}[lemma]{Corollary}\newtheorem{proposition}[lemma]{Proposition}

\title{Generalizations of Khovanski\u\i's theorems on growth of sumsets 
in abelian semigroups}
\author{V\'\i t Jel\'\i nek and Martin Klazar\thanks{Institute 
for Theoretical Computer Science and Department of Applied Mathematics, 
Faculty of Mathematics and Physics of Charles University, 
Malostransk\'e n\'am\v est\'\i\ 25, 118 00 Praha, Czech Republic. 
ITI is supported by the project 1M0021620808 of the Czech Ministry 
of Education.
Email: {\tt \{jelinek, klazar\}@kam.mff.cuni.cz}}}
\date{\today}
\begin{document}\maketitle
\begin{abstract}
We show that if $P$ is a lattice polytope in the nonnegative orthant of $\R^k$ and $\chi$ is a
coloring of the lattice points in the orthant such that the color $\chi(a+b)$ depends only on 
the colors $\chi(a)$ and $\chi(b)$, then the number of colors of the lattice points
in the dilation $nP$ of $P$ is for large $n$ given by a polynomial (or, for rational $P$, by a quasipolynomial). 
This unifies a classical result of Ehrhart and Macdonald on lattice points in polytopes 
and a result of Khovanski\u\i{} on sumsets in semigroups. We also prove a strengthening of multivariate 
generalizations of Khovanski\u\i's theorem. Another result of Khovanski\u\i{} states that the 
size of the image of a finite set after $n$ applications of 
mappings from a finite family of mutually commuting mappings is for large $n$ a polynomial. 
We give a combinatorial proof of a multivariate generalization of this theorem.
\end{abstract}
\section{Introduction}

In many classes of enumerative combinatorial problems, every counting 
function is equal---usually for sufficiently large arguments---to a polynomial or to a quasipolynomial. In this article, we consider several
classes of problems with this property, (re)derive their polynomiality 
in a more uniform manner, and generalize and strengthen existing results. We begin 
with three important examples.

\subsection{Lattice polytopes, sumsets in semigroups, ideals in a poset}

For $n\in\N$ and a lattice polytope $P\subset\R^k$, which is a convex hull of a finite set 
of points from $\Z^k$, denote by $i(P,n)$ 
the number of the lattice points lying in the dilation $nP=\{nx:\;x\in P\}$ of $P$,
$$
i(P,n)=|nP\cap \Z^k|.
$$
Ehrhart and Macdonald obtained the following result.

\begin{theorem}[Ehrhart \cite{ehrh}, Macdonald \cite{macd63, macd71}]\label{mrbody}
The number $i(P,n)$ of the lattice points in $nP$ is for \emph{all $n\in\N$} given by a 
polynomial. 
\end{theorem}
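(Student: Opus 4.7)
The plan is to triangulate $P$ into lattice simplices, reduce to counting lattice points in dilations of a single simplex, and then evaluate the Ehrhart series via a fundamental parallelepiped. First I would fix a triangulation $\mathcal{T}$ of $P$ whose vertices lie in $\Z^k$, which exists because $P$ is a lattice polytope. Every point of $nP$ lies in the relative interior of a unique cell of $n\mathcal{T}$, so $i(P,n)=\sum_{\Delta\in\mathcal{T}}|\mathrm{relint}(n\Delta)\cap\Z^k|$. By M\"obius inversion on the (Boolean) face lattice of each simplex, each summand is a $\Z$-linear combination of the closed counts $i(F,n)$ for faces $F\subseteq\Delta$, and so it suffices to prove that for every closed lattice $d$-simplex $\Delta$ the count $i(\Delta,n)$ agrees with a polynomial in $n$ of degree $d$ for every $n\in\No$.

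For such a closed simplex $\Delta=\conv(v_0,\dots,v_d)$, I would lift to the cone $C=\{\sum_i\lambda_i(v_i,1):\lambda_i\ge 0\}\subset\R^{k+1}$, so that $i(\Delta,n)$ equals the number of lattice points of $C$ whose last coordinate is $n$. Since the $v_i$ are affinely independent, every lattice point of $C$ admits a unique decomposition $w+\sum_i a_i(v_i,1)$ with $a_i\in\No$ and $w$ in the half-open parallelepiped $\Pi=\{\sum_i\lambda_i(v_i,1):0\le\lambda_i<1\}$; the set $S=\Pi\cap\Z^{k+1}$ is finite because $\Pi$ is bounded. Grouping lattice points of $C$ by last coordinate gives the Ehrhart series
$$
\sum_{n\ge 0}i(\Delta,n)\,t^n=\frac{h^*(t)}{(1-t)^{d+1}},\qquad h^*(t)=\sum_{w\in S}t^{\mathrm{ht}(w)},
$$
where $h^*(t)\in\Z[t]$ has degree at most $d$ because the last coordinate of each $w\in\Pi$ is an integer lying in $[0,d+1)$. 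Expanding via $(1-t)^{-d-1}=\sum_{n\ge 0}\binom{n+d}{d}t^n$ yields $i(\Delta,n)=\sum_{j=0}^d h_j\binom{n-j+d}{d}$, a single polynomial in $n$ valid for every $n\in\No$.

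Reassembling via the triangulation and the M\"obius inversion on faces then expresses $i(P,n)$ as a $\Z$-linear combination of simplex counts, giving the desired polynomial in $n$. The main obstacle is to obtain polynomiality \emph{for every} $n\in\N$ rather than merely for large $n$; this relies crucially on $h^*(t)$ being a polynomial of degree at most $d$, which guarantees that the extensions $\binom{n-j+d}{d}$ agree with the series coefficients at the small values of $n$ as well. The analogous strategy for a rational polytope replaces the integrality of the heights of points of $\Pi$ by a periodic condition and hence yields a quasipolynomial rather than a polynomial.
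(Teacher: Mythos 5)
Your proof is correct and is the standard Ehrhart-theory argument: triangulate into lattice simplices, lift each closed simplex to a simplicial cone, decompose the lattice points of the cone uniquely through the half-open fundamental parallelepiped, and read off the Ehrhart series $h^*(t)/(1-t)^{d+1}$ with $h^*$ a polynomial of degree at most $d$, so that expanding $(1-t)^{-(d+1)}$ into binomial coefficients produces a single polynomial matching $i(\Delta,n)$ for every $n\ge 0$; M\"obius inversion over the face lattices then transports polynomiality from closed simplices to open cells and finally to $P$ itself.

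This is, however, a genuinely different route from the one in the paper --- and in fact a stronger one. The paper does not prove Theorem~\ref{mrbody} in its stated form at all; it explicitly proves only the weaker conclusion that $i(P,n)$ is \emph{eventually} polynomial, as a corollary of the more general Theorem~\ref{nasepoly}. That argument rests on the geometric Lemma~\ref{geomlemma} (proved via Carath\'eodory's theorem), which rewrites $nP\cap\Z^k$ as the sumset $(n-k)*(P\cap\Z^k)+(kP\cap\Z^k)$ for $n\ge k$, and then invokes part~2 of Khovanski\u\i's Theorem~\ref{summnoz}, itself deduced from Stanley's theorem on lower ideals and Dickson's lemma through generating functions. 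The paper's approach buys a unified semigroup-and-coloring framework in which Ehrhart's theorem, Khovanski\u\i's sumset theorems, and Stanley's lower-ideal theorem all fall out of the same machinery; the price is losing control of the small values of $n$. Your approach is self-contained and elementary (linear algebra plus the geometry of simplices) and delivers the sharp all-$n$ statement, including $n=0$. Your closing remark --- that the integrality of the heights of the lattice points in $\Pi$ is exactly what forces a polynomial rather than a quasipolynomial --- is the correct diagnosis and precisely mirrors the paper's passage from Theorem~\ref{nasepoly} to the rational case in Theorem~\ref{naseqpoly}.
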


\noindent
More generally, if $P$ is a rational polytope (its vertices have rational coordinates), 
then $i(P,n)$ is for all $n\in\N$ given by a quasipolynomial (the definition of a 
quasipolynomial is recalled in Section 1.3). See Stanley 
\cite[Section 4.6]{stanEC1} for more information.

For a commutative semigroup $(G,+)$ and subsets $A,B\subset G$, consider the sumsets 
$$
n*A=\{a_1+\cdots+a_n:\;a_i\in A\}\ \mbox { and }\ 
A+B=\{a+b:\;a\in A, b\in B\}.
$$
For a (typically infinite) set $X$, its subset $B\subset X$, and a family 
${\cal F}$ of mutually commuting mappings $f\colon X\to X$, the $n$th iterated image of $B$ by ${\cal F}$ is
\[
{\cal F}^{(n)}(B)=\bigcup_{f_i\in{\cal F}}(f_1\circ\dots\circ f_n)(B),
\]                                                                    
where $f(B)$ denotes the set $\{f(x):\,x\in B\}$.
The following three theorems are due to Khovanski\u\i.

\begin{theorem}[Khovanski\u\i\ \cite{khov1}]\label{summnoz}
Let $A$ and $B$ be finite sets in a commutative semigroup.
\begin{enumerate}
\item For large $n$, the cardinality of the sumset $|n*A|$ is  given by a polynomial. 
\item For large $n$, the cardinality of the sumset $|n*A+B|$ is  given 
by a polynomial. 
\end{enumerate}
\end{theorem}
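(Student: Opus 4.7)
The plan is to parametrize the sumsets by lattice points in a simplex, quotient out the redundancy using Dickson's lemma on componentwise order, and reduce the count to an Ehrhart-type binomial expression.

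Enumerate $A=\{a_1,\ldots,a_k\}$ and define $\pi\colon\No^k\to G$ by $\pi(m)=m_1a_1+\cdots+m_ka_k$. Writing $|m|=m_1+\cdots+m_k$ and $S_n=\{m\in\No^k:|m|=n\}$, we have $\pi(S_n)=n*A$, so $|n*A|$ equals the number of equivalence classes of the relation $v\sim w\iff\pi(v)=\pi(w)$ on $S_n$. Pick the lexicographically smallest representative in each class to obtain a set $M\subset\No^k$ with $|n*A|=|M\cap S_n|$.

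The crucial observation is that the complement $N=\No^k\setminus M$ is closed under coordinatewise addition of elements of $\No^k$: if $w$ is lex-smaller than $v$ and $\pi(w)=\pi(v)$, then for any $u\in\No^k$, $w+u$ is lex-smaller than $v+u$ and, by commutativity of $G$, $\pi(w+u)=\pi(v+u)$. Thus $N$ is an upward-closed subset of $\No^k$, and by Dickson's lemma only finitely many generators $d_1,\ldots,d_r$ are needed: $N=\bigcup_{i=1}^r(d_i+\No^k)$. Inclusion-exclusion then gives
\[
|M\cap S_n|=\sum_{I\subseteq[r]}(-1)^{|I|}\bigl|(c_I+\No^k)\cap S_n\bigr|,
\]
where $c_I$ denotes the coordinatewise maximum of $\{d_i:i\in I\}$ (with $c_\emptyset=0$). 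Substituting $m=c_I+m'$ turns the $I$-summand into the number of $m'\in\No^k$ with $|m'|=n-|c_I|$, which equals $\binom{n-|c_I|+k-1}{k-1}$ for $n\geq|c_I|$, a polynomial in $n$. Summing, $|n*A|$ agrees with a fixed polynomial for all $n\geq\max_i|d_i|$.

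For part~2, enumerate $B=\{b_1,\ldots,b_\ell\}$ and parametrize $n*A+B$ by pairs $(m,j)\in S_n\times\{1,\ldots,\ell\}$ via $(m,j)\mapsto\pi(m)+b_j$. Declare $(v,j)\sim(w,j')$ when these images agree, select the lex-minimal representative of each class (ordering pairs lexicographically, with $j$ read last), and note that $\sim$ is still compatible with adding any $u\in\No^k$ to the $m$-coordinate. Consequently, for each fixed $j$ the set of $m$ such that $(m,j)$ is non-minimal is upward closed in $\No^k$, so Dickson's lemma applies in each of the $\ell$ layers, and the same Ehrhart calculation, summed over $j$, produces a polynomial expression valid for large $n$. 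The main obstacle that requires care is precisely this layer-by-layer bookkeeping: minimality of $(v,j)$ must be tested against representatives from \emph{every} layer $j'$, not only the layer $j$, which enlarges the non-minimum set within each layer but, crucially, preserves its upward-closure property, so the Dickson argument still goes through.
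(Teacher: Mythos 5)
Your proof is correct and takes essentially the same route as the paper: parametrize by lattice points on a simplex, pick lex-minimal representatives, observe that the non-representatives form an upper ideal because $\pi$ and $<_{lex}$ are both shift-compatible, invoke Dickson's lemma, and finish by inclusion-exclusion over upward cones. The paper packages exactly these steps as Theorem~\ref{stanleyho} and Corollary~\ref{NRgf} and then extracts the polynomial via generating functions (Lemmas~\ref{event_poly} and \ref{bsubst}); your direct computation of the binomial coefficients $\binom{n-|c_I|+k-1}{k-1}$ is the same argument without the power-series wrapper, and your layer-by-layer handling of part~2 matches the paper's treatment of $n*A+B$ via Theorem~\ref{affchov}. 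One small slip in part~1: the stated validity threshold $\max_i|d_i|$ is not large enough in general. The binomial formula for the $I$-term requires roughly $n\ge|c_I|$, so the correct bound is governed by $\max_{I\subseteq[r]}|c_I|=|c_{[r]}|$, the $\ell_1$-norm of the coordinatewise maximum of \emph{all} the $d_i$; for instance with $d_1=(1,0)$ and $d_2=(0,1)$ one has $\max_i|d_i|=1$ while $|c_{\{1,2\}}|=2$. This does not affect the conclusion, only the explicit constant.
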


\begin{theorem}[Khovanski\u\i\ \cite{khov2}]\label{character}
Let $G=(G,+)$ be a commutative semigroup, $A,B\subset G$ be two finite subsets, and 
$\psi:\;G\to\C$ be an additive character of $G$ (i.e., $\psi(a+b)=\psi(a)\psi(b)$). 
Then there exist polynomials $p_a(x)$, $a\in A$, such that for large $n$ one has
$$
\sum_{a\in n*A+B}\psi(a)=\sum_{a\in A}p_a(n)\psi(a)^n.
$$
\end{theorem}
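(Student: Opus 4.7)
The plan is to lift the character sum to a sum over a canonical system of representatives of a congruence on $\No^k\times B$, decompose that system into finitely many translated coordinate orthants, and extract the polynomial-exponential shape by reading off a generating-function coefficient. Enumerate $A=\{a_1,\dots,a_k\}$, write $\alpha_i=\psi(a_i)$, and let $\phi\colon\No^k\times B\to G$ be the semigroup homomorphism $\phi(m,b)=m_1a_1+\dots+m_ka_k+b$. If $R\subseteq\No^k\times B$ is a complete set of representatives for the equivalence $(m,b)\sim(m',b')\Longleftrightarrow\phi(m,b)=\phi(m',b')$, then
\[
\sum_{g\in n*A+B}\psi(g)=\sum_{\substack{(m,b)\in R\\ |m|=n}}\psi(b)\prod_{i=1}^k\alpha_i^{m_i},
\]
because $\psi\circ\phi$ is insensitive to the choice of representative; the goal reduces to choosing $R$ combinatorially.

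Fix a term order on $\No^k\times B$: lexicographic order on the $m$-coordinate, broken by any fixed order on the finite factor $B$. Let $R$ consist of the $<$-minimum element of each $\sim$-class. Because $\phi$ is a homomorphism in $m$, the relation $\sim$ is translation-invariant in $m$; so is the chosen order. Hence if $(m,b)\notin R$, witnessed by some $(m',b')\sim(m,b)$ with $(m',b')<(m,b)$, then $(m+e,b)\sim(m'+e,b')$ with $(m'+e,b')<(m+e,b)$, so $(m+e,b)\notin R$ as well. For each fixed $b\in B$ this says the slice $I_b=\{m:(m,b)\notin R\}$ is upward-closed in $\No^k$, i.e.\ a monomial ideal. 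Dickson's lemma together with the standard decomposition of the complement of a monomial ideal produces a finite disjoint decomposition $\No^k\setminus I_b=\bigsqcup_j(v_{j,b}+\No^{S_{j,b}})$ with $S_{j,b}\subseteq\{1,\dots,k\}$ and $v_{j,b}$ supported off $S_{j,b}$; equivalently, $R$ is a finite disjoint union of Stanley pieces $(v+\No^S)\times\{b\}$.

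On such a piece, the contribution to the sum above restricted to $|m|=n$ equals
\[
\psi(b)\Bigl(\prod_{i\notin S}\alpha_i^{v_i}\Bigr)\,[t^{n-|v|}]\prod_{i\in S}\frac{1}{1-\alpha_it},
\]
valid whenever $n\ge|v|$. A partial-fraction expansion, with higher-order poles when some $\alpha_i$ coincide and with factors corresponding to $\alpha_i=0$ collapsing trivially, writes $[t^m]\prod_{i\in S}(1-\alpha_it)^{-1}$ in the form $\sum_{i\in S}q_i(m)\alpha_i^m$ for polynomials $q_i$. Summing over the finitely many Stanley pieces and collecting by the distinct exponentials $\psi(a_i)^n=\alpha_i^n$ delivers the claimed identity once $n$ exceeds the largest $|v|$ appearing. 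The hard step is the Stanley-style decomposition of $R$ in paragraph two: all the combinatorial content lies in proving that the canonical representatives form a finite union of translated coordinate orthants, a fact I expect to extract from the same structural lemma that underlies the paper's proof of Theorem~\ref{summnoz}; once it is in hand, the remaining generating-function manipulations are routine.
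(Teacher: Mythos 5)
Your proof is correct and follows essentially the same route as the paper's proof of the more general Theorem~\ref{charactermulti}: lexicographically minimal representatives of the evaluation map's fibers (your $R$, the paper's $P$-substantial points), the observation via translation-invariance that non-representatives form an upper ideal so Dickson's lemma applies, and a $\psi$-weighted generating function from which the polynomial-exponential form is read off. The remaining differences are cosmetic---you work on $\No^k\times B$ rather than appending $B$ as extra coordinates of $\No^{k+|B|}$, you pass immediately to a univariate series in $t$ rather than a multivariate series followed by a $P$-substitution, and you invoke a disjoint Stanley decomposition of $\No^k\setminus I_b$ where the paper uses inclusion-exclusion over ordinary orthants.
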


\begin{theorem}[Khovanski\u\i\ \cite{khov1}]\label{iterimage}
If $B$ is a finite subset of $X$ and ${\cal F}$ is finite family of mutually commuting 
mappings from $X$ to itself, then the cardinality of the 
iterated image ${\cal F}^{(n)}(B)$ is for large $n$ given by a polynomial in $n$.
\end{theorem}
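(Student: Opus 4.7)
The plan is to reduce Theorem \ref{iterimage} to part 2 of Theorem \ref{summnoz} by constructing a commutative semigroup in which the iterated image $\mathcal{F}^{(n)}(B)$ appears, up to a harmless absorbing element, as a sumset of the form $n*A+B'$.

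Write $\mathcal{F}=\{f_1,\dots,f_k\}$ and, for $\alpha=(\alpha_1,\dots,\alpha_k)\in\No^k$, set $f^\alpha=f_1^{\alpha_1}\circ\cdots\circ f_k^{\alpha_k}$; this is well-defined (independent of the order of composition) because the $f_i$ commute, and satisfies $f^{\alpha+\beta}=f^\alpha\circ f^\beta$. Let $Y=\{f^\alpha(b):\alpha\in\No^k,\ b\in B\}\subseteq X$; it is closed under each $f_i$, and
\[
\mathcal{F}^{(n)}(B)=\{f^\alpha(b):|\alpha|=n,\ b\in B\}\subseteq Y,
\]
where $|\alpha|=\alpha_1+\cdots+\alpha_k$.

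Form the disjoint union $G=\No^k\sqcup Y\sqcup\{\infty\}$ and define a binary operation $+$ on $G$ by: (i) on $\No^k$, ordinary coordinatewise addition; (ii) $\alpha+y=y+\alpha=f^\alpha(y)$ for $\alpha\in\No^k$ and $y\in Y$; (iii) $y+y'=\infty$ for $y,y'\in Y$; (iv) $\infty+g=g+\infty=\infty$ for every $g\in G$. A short case analysis based on the location of the three summands (using $f^{\alpha+\beta}=f^\alpha\circ f^\beta$ in the mixed case, and the absorbing nature of $\infty$ in the remaining cases) shows that $+$ is associative, so $(G,+)$ is a commutative semigroup.

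Finally set $A=\{e_1,\dots,e_k\}\subseteq\No^k\subseteq G$ and $B'=B\subseteq Y\subseteq G$; both are finite subsets of $G$. From rule (i), $n*A=\{\alpha\in\No^k:|\alpha|=n\}$, and rule (ii) then gives
\[
n*A+B'=\{f^\alpha(b):|\alpha|=n,\ b\in B\}=\mathcal{F}^{(n)}(B),
\]
all of whose elements lie in $Y$ (in particular, none equals $\infty$), so $|n*A+B'|=|\mathcal{F}^{(n)}(B)|$. Applying part~2 of Theorem~\ref{summnoz} to the finite sets $A,B'\subseteq G$ yields that $|\mathcal{F}^{(n)}(B)|$ is a polynomial in $n$ for all sufficiently large $n$. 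The main point requiring care is the verification that $(G,+)$ is a genuine commutative semigroup; once that is in place, the rest is an unwinding of definitions.
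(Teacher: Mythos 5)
Your proof is correct and takes a genuinely different route from the paper's. You reduce Theorem~\ref{iterimage} to part~2 of Theorem~\ref{summnoz} by building an auxiliary commutative semigroup on the disjoint union $\No^k\sqcup Y\sqcup\{\infty\}$, in which the orbit of $B$ under length-$n$ compositions becomes the shifted sumset $n*A+B'$. The associativity check rests only on the identity $f^{\alpha+\beta}=f^\alpha\circ f^\beta$ (a consequence of the commutativity of the $f_i$), the closure of $Y$ under each $f_i$, and the absorbing role of $\infty$, and all cases go through; the disjoint-union device correctly prevents any clash between the three types of elements, and since $n*A+B'\subseteq Y$ the cardinality transfers exactly. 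There is no circularity, because the paper proves part~2 of Theorem~\ref{summnoz} independently (via Corollary~\ref{NRgf}, Theorem~\ref{stanleyho}, and Lemma~\ref{event_poly}), not through Theorem~\ref{iterimage}.

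This is a nice reversal of the implication the paper records just after the theorem statements: there, taking $X=G$ and ${\cal F}=\{s_a:a\in A\}$ with $s_a(x)=x+a$ shows that Theorem~\ref{iterimage} implies part~2 of Theorem~\ref{summnoz}. Your construction gives the converse, so the two statements are in fact equivalent up to elementary reductions. The paper instead proves the stronger multivariate Theorem~\ref{iterimagemulti} directly, by a longer combinatorial argument that builds a partial coloring of $\No^{k^2}$, shows the set of $P$-substantial points is a simple set (a finite union of generalized orthants), and then applies Theorem~\ref{zobstanleyho} and Proposition~\ref{event_poly_right}. Your argument is shorter and more transparent but, as written, yields only the univariate statement; it does, however, extend to the multivariate refinement by replacing $A$ with $A_1,\dots,A_l$ where $A_i=\{e_j:f_j\in P_i\}$ and invoking Theorem~\ref{affchov} or Theorem~\ref{stron_ev_poly_chov} in place of part~2 of Theorem~\ref{summnoz}.
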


\noindent Khovanski\u\i{} stated and proved just part 2 of 
Theorem~\ref{summnoz} (as a corollary of Theorem~\ref{iterimage}); however, 
part 2 immediately implies part 1 which we state explicitly for the purpose 
of later reference. Both Theorem~\ref{character} and Theorem~\ref{iterimage} 
include part 2 of Theorem~\ref{summnoz} as a particular case: set $\psi\equiv 
1$, respectively set $X=G$ and consider the mappings ${\cal F}=\{s_a:\;a\in 
A\}$ where $s_a(x)=x+a$. 

Let us now consider the poset $(\No^k,\le)$, $\No=\{0,1,2,\dots\}$, with 
componentwise ordering: 
$$ a=(a_1,\dots,a_k)\le b=(b_1,\dots,b_k)\iff a_i\le 
b_i,\ i=1,\dots,k. $$ 
A lower ideal $S\subset\No^k$, is a set satisfying the condition 
$a\le b, b\in S\Rightarrow a\in S$. The following result was first posed as a 
problem in the American Mathematical Monthly, see also \cite[Exercise 6 in 
Chapter 4]{stanEC1}.

\begin{theorem}[Stanley \cite{stan7576}]\label{dolniid}
For a lower ideal $S$ in the poset $(\No^k,\le)$, the number of the elements 
$a=(a_1,\dots,a_k)\in S$ with $\|a\|_1=a_1+\cdots+a_k=n$ is for large $n$ given by a polynomial.
\end{theorem}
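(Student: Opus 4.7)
The plan is to pass to the complement of $S$ inside $\No^k$, apply Dickson's lemma to express this complement as a finite union of translates of the orthant $\No^k$, and then compute the cardinality of the $n$th slice by inclusion--exclusion. The total count on the hyperplane $\|a\|_1 = n$ is the polynomial $\binom{n+k-1}{k-1}$, so once we handle the complement we are done.

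More concretely, let $U = \No^k \setminus S$. Since $S$ is a lower ideal, $U$ is an upper ideal, i.e., closed under the operation of increasing any coordinate. By Dickson's lemma the set $M$ of minimal elements of $U$ is finite, say $M = \{m_1,\dots,m_r\}$, and
\[
U = \bigcup_{i=1}^{r} \bigl(m_i + \No^k\bigr).
\]
Let $f(n) = |\{a \in S : \|a\|_1 = n\}|$ and $g(n) = |\{a \in U : \|a\|_1 = n\}|$, so that $f(n) = \binom{n+k-1}{k-1} - g(n)$. It therefore suffices to show that $g(n)$ agrees with a polynomial for all large $n$.

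For a nonempty $I \subseteq \{1,\dots,r\}$ set $m_I = \bigvee_{i \in I} m_i$, the componentwise maximum, which is well defined in $\No^k$. Then $\bigcap_{i \in I}(m_i + \No^k) = m_I + \No^k$, and the number of lattice points of this shifted orthant on the hyperplane $\|a\|_1 = n$ equals $\binom{n - \|m_I\|_1 + k - 1}{k-1}$ whenever $n \ge \|m_I\|_1$, and vanishes otherwise. Inclusion--exclusion therefore yields
\[
g(n) = \sum_{\emptyset \ne I \subseteq \{1,\dots,r\}} (-1)^{|I|+1} \binom{n - \|m_I\|_1 + k - 1}{k-1}
\]
for every $n \ge \max_I \|m_I\|_1$, and the right-hand side is manifestly a polynomial in $n$ on that range. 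Subtracting from $\binom{n+k-1}{k-1}$ gives the desired polynomial expression for $f(n)$.

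The only nontrivial input is Dickson's lemma, which guarantees that the antichain of minimal elements of $U$ is finite; everything afterwards is a routine computation. An alternative, heavier approach would be to deduce the statement directly from Theorem~\ref{summnoz} by viewing $S \cap \{\|a\|_1 = n\}$ as a sumset in a suitable graded semigroup, but the complement-plus-inclusion--exclusion argument above is substantially more elementary and yields the polynomial explicitly.
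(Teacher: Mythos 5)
Your argument is essentially the paper's: complement $S$ to an upper ideal, apply Dickson's lemma to obtain finitely many minimal elements, and use inclusion--exclusion over intersections of shifted orthants (which are again shifted orthants). The only difference is presentational: the paper packages the decomposition as the stronger generating-function statement $F_S = r/((1-x_1)\dotsb(1-x_k))$ in Theorem~\ref{stanleyho} and then extracts polynomiality via the substitution $x_i:=y$ together with Lemmas~\ref{event_poly} and \ref{bsubst}, whereas you carry out the coefficient extraction by hand and land directly on the explicit binomial-coefficient formula.
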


We prove all five theorems (Theorem~\ref{mrbody} in a weaker form for large
$n$ only) in the framework of more general results in Section 2.

\subsection{Our results}
At first, we wanted to understand the connection between Theorems \ref{mrbody} and 
\ref{summnoz}, and to find reasons for polynomiality of these two and other 
classes. This turned into a goal to explain the above results on polynomiality in 
a uniform manner, and to give combinatorial proofs of these combinatorial results; 
some of the above theorems were originally proved by somewhat opaque algebraic arguments. 
We succeeded in this 
to large extent for the five theorems. In Section 2, we demonstrate that 
Theorems \ref{mrbody}--\ref{iterimage} (Theorem~\ref{mrbody} for large
$n$ only) follow as corollaries of Stanley's Theorem \ref{dolniid} or of its natural extensions stated in Theorems~\ref{stanleyho} and 
\ref{zobstanleyho}. We will give 
multivariate generalizations of Theorems~\ref{summnoz}--\ref{iterimage}.  
Theorem~\ref{zobstanleyho} can be used to prove polynomiality of further 
classes of enumerative problems, which we briefly mention in Section 3 and will discuss in 
details in \cite{jeli_klaz_next}.  

We build on the results of Khovanski\u\i{} \cite{khov1, khov2}, Nathanson and 
Ruzsa \cite{nath_ruzs} and Stanley \cite{stan7576}. Khovanski\u\i's original 
proof of part 2 of Theorem \ref{summnoz} as a corollary of Theorem 
\ref{iterimage} in \cite{khov1} was algebraic, by means of the Hilbert 
polynomial of graded modules. In \cite{khov2}, he gave a combinatorial proof 
of part 2 as a corollary of Theorem \ref{character}. Extending 
Khovanski\u\i's algebraic argument, Nathanson \cite{nath} proved a 
multivariate generalization of part 2 (see Theorem \ref{affchov}). Then 
Nathanson and Ruzsa \cite{nath_ruzs} gave a simple combinatorial proof for a 
multivariate generalization of part 1 (see Theorem \ref{multivar}). 

Our contribution is a common strengthening of these generalizations in 
Theorem~\ref{stron_ev_poly_chov}: If $A_1,\dots, A_l$ are finite sets in a 
commutative semigroup $(G,+)$ and $$ p(n_1,\dots, 
n_l):=|n_1*A_1+\cdots+n_l*A_l|, $$ then there is a constant $c>0$ such that 
for any $l$-tuple of arguments $n_1,\dots, n_l$, if the arguments $n_i$ not 
exceeding $c$ are fixed, then $p(n_1,\dots,n_l)$ is a polynomial function in 
the remaining arguments $n_i$ bigger than $c$. We characterize such 
eventually strongly polynomial functions in 
Proposition~\ref{event_poly_right}. 

In Theorems \ref{nasepoly} and  \ref{naseqpoly}, we prove our next result, a common 
generalization of a weaker form of Theorem \ref{mrbody} and part 1 of Theorem \ref{summnoz}. 
We prove that if $P$ is a lattice polytope in the nonnegative orthant of $\R^k$, and $\chi$ is 
a coloring of the lattice points in the orthant such that $\chi(a+b)$ depends only 
on the colors $\chi(a)$ and $\chi(b)$, then the number of colors 
$$
|\chi(nP\cap\Z^k)|
$$
used on the points $nP\cap\Z^k$
is a polynomial in $n$ for large $n$. More generally, if $P$ is a rational polytope, then 
the number of colors is for large $n$ a quasipolynomial (Theorem~\ref{naseqpoly}). 
This includes  Theorem \ref{mrbody} (in a weaker form for large $n$) and part 1 of 
Theorem \ref{summnoz} as particular cases. We want to remark that our Theorem \ref{nasepoly} 
is to some extent hinted to already by Khovanski\u\i{} \cite[paragraph 5]{khov1} who derives, 
as an application of part 2 of Theorem~\ref{summnoz}, the weaker form of Theorem \ref{mrbody}. 
We also obtain Theorem~\ref{nasepoly} as a corollary of part 2 of Theorem \ref{summnoz} 
and a geometric lemma.

Our third result are multivariate generalizations of Theorems \ref{character} and 
\ref{iterimage}, presented in Theorems \ref{charactermulti} and \ref{iterimagemulti}, respectively. 
We give combinatorial proofs. The proof of Theorem \ref{charactermulti} on additive characters 
is a simple extension of the combinatorial proof of Theorem~\ref{stron_ev_poly_chov} and we 
only give a sketch of the proof. 
The proof of Theorem \ref{iterimagemulti} on iterated images is more interesting. We derive 
it from Theorem \ref{zobstanleyho} which extends Stanley's Theorem \ref{dolniid} on lower 
ideals. Theorem \ref{zobstanleyho} characterizes the sets $S\subset\No^k$ for which 
Theorem \ref{dolniid} holds. 

Our combinatorial approach is based on expressing counting problems in terms of 
colorings $\chi$ of $\No^k$ and on counting the color classes of 
$\chi$ via appropriate representatives, so called substantial points. We have learned 
both techniques from Nathanson and Ruzsa \cite{nath_ruzs}. A new ingredient is
the representation of counting functions in a compact and convenient way by their generating 
power series (which play almost no role in \cite{khov1, khov2, nath, nath_ruzs}). We recall 
some results on them in the next subsection.

In Section 3, we give some concluding remarks and references to further examples of 
polynomial classes of enumerative problems. 

\subsection{Notation and results on power series}

We fix notation and recall some useful results on power series. $\N$ is the 
set of natural numbers $\{1,2,\dots\}$ and $\No$ is the set $\{0\}\cup\N$. 
The symbols for number sets $\Z$, $\Q$, $\R$, and $\C$ have their usual 
meanings. For $n\in\N$, the set $\{1,2,\dots,n\}$ is denoted by $[n]$. We call 
the elements of $\Z^k$ {\em lattice points}. All semigroups in this article 
are commutative. We will use the lexicographic ordering of $\No^k$, which is 
a total ordering: $a<_{lex}b$ iff $a_1=b_1,\dots,a_i=b_i,a_{i+1}<b_{i+1}$ for 
some $i$, $0\le i<k$.

A {\em quasipolynomial} is a function $f\colon\Z\to\C$ for which there are 
$d$ polynomials $p_1(x),\dots, p_d(x)$ such that $f(n)=p_i(n)$ if $n\equiv i\ 
{\rm mod }\ d$; $d$ is the {\em period} of $f$. Equivalently, 
$f(n)=a_k(n)n^k+\dots+a_1(n)n+a_0(n)$ where $a_i\colon\Z\to\C$ are periodic 
functions. The term quasipolynomial is sometimes (e.g., in \cite{khov2}) used 
also for linear combinations of exponentials with polynomial coefficients (as 
in Theorem \ref{character}); we use it in the present sense. 

We shall use formal power series 
$$
F(x_1,\dots,x_k)=\sum_{a\in\No^k}\alpha(a)x_1^{a_1}\dots x_k^{a_k}
$$
with real coefficients $\alpha(a)=\alpha(a_1,\dots,a_k)$ and several variables 
$x_1,\dots,x_k$; their set is denoted by $\R[[x_1,\dots,x_k]]$. The symbol
$$
[x_1^{a_1}\dots x_k^{a_k}]\;F
$$ 
denotes the coefficient $\alpha(a_1,\dots,a_k)$ of $x_1^{a_1}\dots x_k^{a_k}$ in 
$F$. For a subset $A\subset\No^k$, $F_A(x)=F_A(x_1,\dots,x_k)\in\R[[x_1,\dots,x_k]]$ 
is the power series
$$
F_A(x_1,\dots,x_k)=\sum_{a\in A}x_1^{a_1}\dots x_k^{a_k},
$$
i.e., $\alpha(a)$ is the characteristic function of $A$. 

\begin{lemma}\label{event_poly}
Let $F\in\R[[x_1,\dots,x_k]]$ be a rational power series of the form
$$
F(x_1,\dots,x_k)=\frac{r(x_1,\dots,x_k)}{(1-x_1)^{e_1}\dots(1-x_k)^{e_k}}
$$
where $r\in\R[x_1,\dots,x_k]$ is a polynomial and $e_i\in\No$. 
Then for every $l\in\No$, $l\le k$, and every $l$-tuple
$(a_1,\dots,a_l)\in\No^l$, there exist a constant $c>0$ and a polynomial 
$p\in\R[x_{l+1},\dots,x_k]$ (for $l=k$ we understand $p$ as a real constant) 
such that if $n_{l+1},\dots,n_k\in\N$ are all bigger than $c$, then
$$
[x_1^{a_1}\dots x_l^{a_l}x_{l+1}^{n_{l+1}}\dots x_k^{n_k}]\;F=p(n_{l+1},\dots,n_k).
$$
\end{lemma}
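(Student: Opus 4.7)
The plan is to expand $F$ term by term and extract the coefficient explicitly. Write the numerator as $r(x) = \sum_{b\in B} r_b\, x_1^{b_1}\cdots x_k^{b_k}$, where $B\subset\No^k$ is the finite support of $r$, and use the standard identity
$$
(1-x_i)^{-e_i} = \sum_{m\ge 0}\binom{m+e_i-1}{e_i-1}\,x_i^m \qquad (e_i\ge 1),
$$
with the convention that this factor is the constant series $1$ when $e_i=0$. Multiplying these series together with $r$, one obtains
$$
[x_1^{n_1}\cdots x_k^{n_k}]\,F \;=\; \sum_{b\in B} r_b\,\prod_{i=1}^{k}\gamma_i(n_i-b_i),
$$
where $\gamma_i(m)=\binom{m+e_i-1}{e_i-1}$ if $e_i\ge 1$ and $m\ge 0$; $\gamma_i(m)=0$ if $e_i\ge 1$ and $m<0$; and $\gamma_i(m)$ equals $1$ if $e_i=0,\,m=0$ and $0$ otherwise.

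Now set $c := \max\{b_i : b\in B,\, 1\le i\le k\}$ (take $c=0$ and $p\equiv 0$ if $B$ is empty). Fix $a_1,\dots,a_l$ and suppose $n_{l+1},\dots,n_k > c$. If some $i>l$ has $e_i=0$, then $n_i\ne b_i$ for every $b\in B$, so $\gamma_i(n_i-b_i)=0$ identically and the entire sum vanishes; we take $p\equiv 0$. Otherwise every $e_i$ with $i>l$ is positive, and $n_i-b_i\ge 0$ holds for every $b\in B$, so $\gamma_i(n_i-b_i)=\binom{n_i-b_i+e_i-1}{e_i-1}$ is a polynomial in $n_i$ of degree $e_i-1$ whose coefficients depend only on $b_i$ and $e_i$. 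The factors $\gamma_i(a_i-b_i)$ for $i\le l$ are constants determined by the fixed data $a_1,\dots,a_l$. Hence the displayed sum is a finite linear combination of products of polynomials in $n_{l+1},\dots,n_k$, which is the desired polynomial $p\in\R[x_{l+1},\dots,x_k]$.

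There is no real obstacle here; the argument is essentially bookkeeping. The only subtlety is the degenerate case $e_i = 0$ for $i > l$: since then $\gamma_i$ is supported only at $m=0$, the threshold $c$ must be chosen strictly larger than every $b_i$ appearing in $B$ so that this factor is eventually $0$ (and hence trivially polynomial). One must also remember that the polynomial $p$ is allowed to depend on the fixed prefix $a_1,\dots,a_l$, as is indeed the case here through the constants $\gamma_i(a_i-b_i)$ for $i\le l$.
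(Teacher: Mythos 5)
Your proof is correct and follows essentially the same route as the paper: expand each factor $(1-x_i)^{-e_i}$ by the binomial theorem, decompose $F$ into a finite linear combination of monomial-times-geometric factors, and read off the coefficient as a product of polynomials in the $n_i$ (with the threshold $c$ taken beyond the support of $r$). You are simply more explicit than the paper's very terse two-line argument, in particular in spelling out the degenerate case $e_i=0$, which the paper handles implicitly.
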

\begin{proof}
Let us check that the claim holds when $k=1$, $0\le l\le 1$, and $r(x_1)=r(x)=x^b$. By the binomial expansion,
$$
\frac{x^b}{(1-x)^e}=\sum_{n\ge 0}\binom{n+e-1}{e-1}x^{b+n}=
\sum_{n\ge b}\binom{n+e-1-b}{e-1}x^{n}. 
$$
The general case reduces to this by 
expressing $F$ as a  finite linear combination of terms of the type
$$
\frac{x_1^{b_1}\dots x_k^{b_k}}{(1-x_1)^{e_1}\dots(1-x_k)^{e_k}}=
\prod_{i=1}^k\frac{x_i^{b_i}}{(1-x_i)^{e_i}}.
$$
\end{proof}

\noindent
We add three comments to the lemma. If the polynomial $r(x_1,\dots,x_k)$ has rational 
coefficients, then $p(x_{l+1},\dots,x_k)$ has rational coefficients as well. Also, 
Lemma~\ref{event_poly} holds more generally for 
any subset of the set of variables $x_1,\dots,x_k$ (we have chosen the subset 
$x_{l+1},\dots,x_k$ only for the convenience of notation). 
Finally, Lemma~\ref{event_poly} can be strengthened by selecting the constant
$c$ first and thus making it independent on the $l$-tuples $(a_1,\dots,a_l)$. We 
return to this matter in Proposition~\ref{event_poly_right}.

Let $F\in\R[[x_1,\dots,x_k]]$ be a power series and $P=\{P_1,\dots,P_l\}$ be a partition 
of the index set $[k]$ into $l$ blocks. The substitution $x_i:=y_j$, 
where $1\le i\le k$ and $j$ is the unique index satisfying $i\in P_j$, turns
$F$ into the power series $G\in\R[[y_1,\dots,y_l]]$ with the 
coefficients
$$
[y_1^{n_1}\dots y_l^{n_l}]\;G=\sum[x_1^{a_1}\dots x_k^{a_k}]\;F,
$$
where we sum over all $a\in\No^k$ satisfying $\sum_{i\in P_j}a_i=n_j$, $1\le j\le l$.
We call a substitution of this kind {\em $P$-substitution}. It is immediate that
$P$-substitutions preserve the class of rational power series considered in 
Lemma~\ref{event_poly}.

\begin{lemma}\label{bsubst}
If $F\in\R[[x_1,\dots,x_k]]$ has the form $F=r(1-x_1)^{-e_1}\dots(1-x_k)^{-e_k}$,
where $r\in\R[x_1,\dots,x_k]$ and $e_i\in\No$, and 
$G\in\R[[y_1,\dots,y_l]]$ is obtained from $F$ by a $P$-substitution, then
$G=s(1-y_1)^{-f_1}\dots(1-y_l)^{-f_l}$, where $s\in\R[y_1,\dots,y_l]$ and 
$f_i\in\No$. 
\end{lemma}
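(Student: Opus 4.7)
The plan is to observe that the $P$-substitution is nothing more than the literal variable substitution $x_i:=y_{\phi(i)}$, where $\phi\colon[k]\to[l]$ is the map sending $i$ to the unique $j$ with $i\in P_j$. Once this is established, the lemma follows by formal manipulation: applying the substitution to the numerator $r$ gives a polynomial $\tilde r\in\R[y_1,\dots,y_l]$, and applying it to each denominator factor $(1-x_i)^{-e_i}$ gives $(1-y_{\phi(i)})^{-e_i}$, which we then regroup by the value of $\phi(i)$.

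First I would verify the identification of the $P$-substitution with literal variable replacement. If we formally replace $x_i$ by $y_{\phi(i)}$ in the power-series expansion of $F$ we obtain
$$
\sum_{a\in\No^k}[x_1^{a_1}\dots x_k^{a_k}]\,F\cdot\prod_{i=1}^k y_{\phi(i)}^{a_i}
=\sum_{a\in\No^k}[x_1^{a_1}\dots x_k^{a_k}]\,F\cdot\prod_{j=1}^l y_j^{\sum_{i\in P_j}a_i},
$$
and collecting the coefficient of $y_1^{n_1}\dots y_l^{n_l}$ gives exactly $\sum[x_1^{a_1}\dots x_k^{a_k}]\,F$ summed over all $a$ with $\sum_{i\in P_j}a_i=n_j$ for each $j$. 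This matches the definition of $P$-substitution. (The substitution is legitimate because each $x_i$ is being sent to a monomial of positive total degree, so only finitely many terms in the expansion of $F$ contribute to any fixed coefficient of $G$.)

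Next I would apply this substitution directly to the given rational expression for $F$. Setting
$$
\tilde r(y_1,\dots,y_l):=r\bigl(y_{\phi(1)},\dots,y_{\phi(k)}\bigr)\in\R[y_1,\dots,y_l]
$$
and
$$
f_j:=\sum_{i\in P_j}e_i\in\No\qquad(j=1,\dots,l),
$$
the substitution turns $\prod_{i=1}^k(1-x_i)^{-e_i}$ into $\prod_{i=1}^k(1-y_{\phi(i)})^{-e_i}=\prod_{j=1}^l(1-y_j)^{-f_j}$, yielding
$$
G=\frac{\tilde r(y_1,\dots,y_l)}{(1-y_1)^{f_1}\dots(1-y_l)^{f_l}},
$$
which is the claimed form with $s=\tilde r$.

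There is no real obstacle here; the content of the lemma is the routine check that the $P$-substitution, defined a priori by a coefficient-summation formula, agrees with the algebraic variable substitution on rational functions of the permitted form. Once that identification is made, the form of the denominator is preserved automatically and the exponents add blockwise.
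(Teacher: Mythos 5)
Your proof is correct, and it is exactly the routine verification the paper declines to write out: the text merely remarks that ``it is immediate that $P$-substitutions preserve the class of rational power series considered in Lemma~\ref{event_poly}'' and then states Lemma~\ref{bsubst} without a proof environment. Your argument fills this in cleanly: since each $y_{\phi(i)}$ lies in the maximal ideal of $\R[[y_1,\dots,y_l]]$, the assignment $x_i\mapsto y_{\phi(i)}$ extends to a ring homomorphism $\R[[x_1,\dots,x_k]]\to\R[[y_1,\dots,y_l]]$, and tracking its effect on the factorization $F=r\prod_i(1-x_i)^{-e_i}$ gives $\tilde r=r(y_{\phi(1)},\dots,y_{\phi(k)})$ in the numerator and $f_j=\sum_{i\in P_j}e_i$ in the denominator, which is precisely the stated form.

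One small remark: the paper \emph{defines} the $P$-substitution as the literal replacement $x_i:=y_j$ and then records the coefficient formula as a consequence, whereas you take the coefficient formula as the starting point and recover the substitution; this is purely a matter of which description one treats as primary, and the identification you verify in your first paragraph is exactly the one the paper already builds into its definition. Either way, the substance of the proof is the observation that the denominator exponents add blockwise, which you state correctly.
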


\section{Generalizations of Khovanski\u\i's theorems}

This section is devoted to the proofs of our main results, which are Theorems 
\ref{nasepoly}, \ref{naseqpoly}, \ref{stron_ev_poly_chov}, \ref{charactermulti}, 
\ref{iterimagemulti}, and \ref{zobstanleyho}. 

\subsection{Additive colorings}

We shall work with the semigroup $(\No^k,+)$, where the addition of $k$-tuples is defined componentwise. For a (possibly infinite) set of colors $X$, we say that a coloring 
$\chi:\;\No^k\to X$ is {\em additive} if
$$
\chi(a+b)=\chi(c+d)\mbox{ whenever $\chi(a)=\chi(c)$ and $\chi(b)=\chi(d)$},
$$ 
that is, if the color of every sum depends only on the colors of summands. The coloring
$\chi$ then can be viewed as a homomorphism between the semigroups (in fact monoids) 
$(\No^k,+)$ and $(X,+)$. The additivity of $\chi$ 
is equivalent to the seemingly weaker property of {\em shift-stability}, which only requires 
that 
$$
\chi(a+b)=\chi(c+b) \mbox{ for every $b$ whenever $\chi(a)=\chi(c)$}. 
$$
Indeed, if 
$\chi$ is shift-stable and $a,b,c,d\in \No^k$ are arbitrary elements satisfying 
$\chi(a)=\chi(c)$ and $\chi(b)=\chi(d)$, then $\chi(a+b)=\chi(a+d)$ and $\chi(a+d)=\chi(c+d)$, 
so $\chi(a+b)=\chi(c+d)$.

Let $(G,+)$ be a (commutative) semigroup, we may assume that it has a neutral element and is 
a monoid. If $A=(a_1,\dots,a_k)$ is a sequence of (possibly repeating) elements from $G$, then 
the \emph{associated coloring}  
$$
\chi\colon\No^k\to G,\ \chi(v)=\chi((v_1,\dots,v_k))=v_1a_1+\cdots+v_ka_k,
$$ 
is additive. In terms of this coloring, the cardinality of the sumset 
$$
n*A=\{n_1a_1+\dots+n_ka_k\colon n_1+\cdots+n_k=n\}
$$ 
equals to the number of colors $|\chi(nP\cap\Z^d)|$ appearing on the lattice points
in the dilation of the unit simplex 
$$
P=\{x\in\R^k\colon x_i\ge 0, x_1+\cdots+x_k=1\}.
$$

We prove the following common generalization of a weaker form of Theorem \ref{mrbody} (for 
large $n$ only) and part 1 of Theorem \ref{summnoz}.

\begin{theorem}\label{nasepoly}
Let $P$ be a polytope in $\R^k$ with vertices in $\No^k$ and 
let $\chi:\,\No^k\to X$ be an additive coloring. Then, for $n\in\N$ 
sufficiently large, the number of colors
$$
|\chi(nP\cap\Z^k)|=|\chi(nP\cap\No^k)|
$$ 
is given by a polynomial.
\end{theorem}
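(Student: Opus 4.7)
The plan is to deduce Theorem~\ref{nasepoly} from part 2 of Theorem~\ref{summnoz} via a geometric lemma of the following shape: there exist an integer $n_0\ge 0$ and a finite set $B\subset\No^k$ such that for every $n\ge n_0$,
$$
nP\cap\No^k = (n-n_0)*A + B,
$$
where $A=\{v_1,\dots,v_s\}$ is the vertex set of $P$. Once this holds, additivity of $\chi$ makes the image $\chi(\No^k)$ into a commutative monoid under $\chi(a)+\chi(b):=\chi(a+b)$, and $\chi$ becomes a monoid homomorphism. Consequently $\chi(nP\cap\No^k)=(n-n_0)*\chi(A)+\chi(B)$ as subsets of this monoid, and part 2 of Theorem~\ref{summnoz} applied to the finite sets $\chi(A),\chi(B)$ yields a polynomial in $n-n_0$ for large $n$, hence a polynomial in $n$.

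For the geometric lemma I would take $n_0:=s$ and $B:=sP\cap\No^k$, which is finite because $sP$ is bounded. The easy inclusion $(n-s)*A+B\subseteq nP\cap\No^k$ follows from $(n-s)P+sP=nP$ (valid by convexity) together with $v_i\in\No^k$ and $B\subset\No^k$. For the reverse inclusion, given $x\in nP\cap\No^k$, write $x=\sum_{i=1}^s\mu_i v_i$ with $\mu_i\ge 0$ and $\sum\mu_i=n$. Set $m_i:=\lfloor\mu_i\rfloor\in\No$, so that $\sum m_i\in(n-s,n]$; provided $n\ge s$, one can further decrease the $m_i$'s (keeping them in $\No$) to arrange $\sum m_i=n-s$ while preserving $m_i\le\mu_i$. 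The remainder
$$
r := x-\sum_{i=1}^s m_i v_i = \sum_{i=1}^s(\mu_i-m_i)v_i
$$
has nonnegative real coefficients summing to $s$, so $r\in sP$; it lies in $\Z^k$ as a difference of lattice points; and it lies in $\No^k$ as a nonnegative combination of the $v_i\in\No^k$. Hence $r\in B$, and $x=\sum m_i v_i + r\in(n-s)*A+B$.

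The main obstacle is the geometric lemma itself, but the rounding argument above handles it cleanly precisely because the vertices of $P$ lie in the nonnegative orthant, which forces the remainder $r$ into $\No^k$ automatically. Everything else is formal: the passage from $\No^k$ to the quotient monoid $\chi(\No^k)$ relies only on the shift-stability/additivity equivalence established in the excerpt, and polynomial growth is then supplied by Khovanski\u\i's sumset theorem. This yields the statement of Theorem~\ref{nasepoly} for all sufficiently large $n$, namely for $n\ge s$ large enough that $|(n-s)*\chi(A)+\chi(B)|$ has entered its polynomial regime.
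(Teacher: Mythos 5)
Your proof is correct and follows the same overall strategy as the paper: decompose $nP\cap\Z^k$ as a sumset of the form $(n-c)*A+B$ with fixed finite $A,B$ and constant $c$, push the decomposition through the semigroup homomorphism $\chi$ onto the monoid of color classes $\chi(\No^k)$, and invoke part 2 of Theorem~\ref{summnoz}. Where you diverge is in the geometric lemma. The paper's Lemma~\ref{geomlemma} invokes Carath\'eodory's theorem to express each $p\in nP\cap\Z^k$ as a convex combination of at most $k+1$ scaled vertices, which yields $nP\cap\Z^k=(n-k)*(P\cap\Z^k)+(kP\cap\Z^k)$ for $n\ge k$, with the ambient dimension $k$ as the constant. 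You instead write each $x\in nP$ as a nonnegative combination of \emph{all} $s$ vertices with coefficients summing to $n$, take integer parts, and then decrease them to force the coefficient sum to equal $n-s$ exactly; this gives $nP\cap\No^k=(n-s)*\{v_1,\dots,v_s\}+(sP\cap\No^k)$ for $n\ge s$. Your version avoids Carath\'eodory at the cost of a weaker constant: the threshold $s$ and the finite set $sP\cap\No^k$ can be much larger than the paper's $k$ and $kP\cap\Z^k$ when $P$ has many vertices, but since all that is needed for the conclusion is finiteness of $A$ and $B$, either version works. The one step worth being explicit about is the adjustment of the $m_i=\lfloor\mu_i\rfloor$: since $\sum m_i\in(n-s,n]$ and each $m_i\ge 0$, the excess $\sum m_i-(n-s)\le s$ can be subtracted across the $m_i$'s while keeping them nonnegative exactly when $\sum m_i\ge \sum m_i-(n-s)$, i.e.\ when $n\ge s$; you state this correctly, and the resulting remainder $r=\sum(\mu_i-m_i)v_i$ lands in $sP\cap\No^k$ because it is a nonnegative combination of the $v_i\in\No^k$ that is also a lattice point.
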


\noindent
For large $n$, Theorem \ref{mrbody} corresponds to the case when $\chi$ is injective 
(hence additive) and $P$ is a general polytope, 
while part 1 of Theorem \ref{summnoz} corresponds to the case when $\chi$ is a general additive 
coloring and $P$ is the unit simplex. 

We begin with proving a formally stronger version of Theorem \ref{dolniid}; our proof is 
a straightforward adaptation of that 
in \cite{stan7576}. Recall that $S\subset\No^k$ is a lower
ideal in the poset $(\No^k,\le)$ if for every $a\in\No^k$ we have $a\in S$ whenever $a\le b$  
for some $b\in S$. Upper ideals are defined similarly. 
The proof rests on the well-known result, sometimes called Dickson's lemma, 
which states
that all antichains (sets with elements mutually incomparable by $\le$) in $(\No^k,\le)$ are  
finite. This lemma is a corollary of the more general fact that if $(P,\le_P)$ and 
$(Q,\le_Q)$ are two posets which have no infinite antichains and no infinite strictly 
descending chains, then this property carries over to the product poset 
$(P\times Q,\le_{P\times Q})$ (see, e.g., Kruskal~\cite{krus}). 

\begin{theorem}\label{stanleyho}
Let $S\subset\No^k$ be a lower or an upper ideal in the poset $(\No^k,\le)$. 
Then
$$
F_S(x_1,\dots,x_k)=\frac{r(x_1,\dots,x_k)}{(1-x_1)\dots (1-x_k)}
$$
where $r(x_1,\dots,x_k)$ is an integral polynomial.
\end{theorem}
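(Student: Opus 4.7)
The plan is to reduce to the upper ideal case via complementation, and then handle upper ideals directly using Dickson's lemma and inclusion-exclusion.

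First I would dispose of the lower ideal case. If $S$ is a lower ideal, then $T := \No^k \setminus S$ is an upper ideal, and
\[
F_S(x_1,\dots,x_k) = F_{\No^k}(x_1,\dots,x_k) - F_T(x_1,\dots,x_k) = \frac{1}{(1-x_1)\cdots(1-x_k)} - F_T(x_1,\dots,x_k).
\]
So once the upper ideal case is proved with an integer-polynomial numerator $r_T$, the lower ideal version follows with numerator $1 - r_T$, which is still in $\Z[x_1,\dots,x_k]$.

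For an upper ideal $T$, I would invoke Dickson's lemma: the set $M$ of $\le$-minimal elements of $T$ is an antichain in $(\No^k,\le)$, hence finite. Write $M = \{m_1,\dots,m_t\}$. Every element of $T$ dominates some $m_i$, so
\[
T = \bigcup_{i=1}^t (m_i + \No^k).
\]
The key observation is that the intersection of several principal upper sets is again principal: for any $I \subset [t]$,
\[
\bigcap_{i \in I} (m_i + \No^k) = \mu(I) + \No^k, \qquad \mu(I) := \bigl(\max_{i\in I} m_{i,1},\,\dots,\,\max_{i\in I} m_{i,k}\bigr),
\]
where $m_{i,j}$ denotes the $j$th coordinate of $m_i$. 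Since $F_{v+\No^k}(x) = x_1^{v_1}\cdots x_k^{v_k} \big/ \bigl((1-x_1)\cdots(1-x_k)\bigr)$, inclusion-exclusion gives
\[
F_T(x_1,\dots,x_k) = \sum_{\emptyset \neq I \subset [t]} (-1)^{|I|+1}\, \frac{x_1^{\mu(I)_1}\cdots x_k^{\mu(I)_k}}{(1-x_1)\cdots(1-x_k)},
\]
which is exactly the required form with $r_T \in \Z[x_1,\dots,x_k]$.

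The only step requiring any care is the inclusion-exclusion identity, which must be justified at the level of formal power series (termwise, each monomial $x^a$ with $a \in T$ appears with coefficient $\sum_{\emptyset \neq I \subset J(a)} (-1)^{|I|+1} = 1$, where $J(a) = \{i : m_i \le a\} \neq \emptyset$); this is routine. I do not foresee a significant obstacle: once Dickson's lemma supplies the finite generating antichain, the proof is a direct computation. The only mildly subtle point is confirming that $\mu(I)$ really is the componentwise maximum and not something more complicated, but this is immediate from the definition of $\le$ on $\No^k$.
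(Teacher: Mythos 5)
Your proof is correct and follows essentially the same route as the paper: reduce lower ideals to upper ideals by complementation, use Dickson's lemma to get a finite antichain of minimal elements, write the upper ideal as a finite union of orthants $m_i+\No^k$, observe that intersections of such orthants are again orthants given by the componentwise maximum, and apply inclusion-exclusion together with the standard generating function of an orthant. Nothing to add.
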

\begin{proof}
Since every upper ideal $S$ has as its complement $T=\No^k\backslash S$ a lower 
ideal and vice versa, and 
$$
F_S(x)+F_T(x)=F_{\No^k}(x)=
\frac{1}{(1-x_1)\dots (1-x_k)},
$$
it suffices to prove the result only for ideals of one kind. 
Let $S$ be an upper ideal. If $M\subset S$ is the set of the minimal 
elements in $S$, then 
$$
S=\bigcup_{a\in M}O_a,
$$ 
where $O_a=\{b\in\No^k:\;b\ge a\}$. Being an antichain, $M$ is finite by Dickson's lemma and 
$S$ is a finite union of the orthants $O_a$, $a\in M$. For any finite set $T$ of points in 
$\No^k$ we have
$$
\bigcap_{t\in T}O_t=O_s,
$$
where $s=(s_1,s_2,\dots,s_k)$ is the componentwise maximum of the 
points $t\in T$. Thus, by the principle of inclusion and exclusion, the characteristic function
of $S$ is a linear combination, with coefficients $\pm 1$, of characteristic functions of finitely many orthants $O_s$. Since each of them has generating function
$$
F_{O_s}(x)=\frac{x_1^{s_1}\dotsb 
x_k^{s_k}}{(1-x_1)\dotsb (1-x_k)}, 
$$   
we have $F_S(x)=r/((1-x_1)\dots (1-x_k))$ for some integral polynomial $r$.  
\end{proof}

\noindent
Theorem \ref{dolniid} now follows as a corollary, with the help of 
Lemmas \ref{event_poly} and \ref{bsubst} and the $P$-substitution $P=\{\{1,\dots,k\}\}$. 

Next, we prove the multivariate generalizations of Theorem \ref{summnoz} from 
\cite{nath} and \cite{nath_ruzs}; this is necessary, since we need part 2 of 
Theorem~\ref{summnoz} for the proof of Theorem~\ref{nasepoly}. In 
Corollary~\ref{NRgf} we lift the result of Nathanson and Ruzsa to the level 
of generating functions. 

Suppose that $P$ is a partition of $[k]$ into $l$ blocks and $\chi\colon\No^k\to X$ is 
a coloring. For $x\in\No^k$ we define $\|x\|_P$ to be the $l$-tuple $(c_1,\dots,c_l)\in\No^l$,
where $c_i=\sum_{j\in P_i}x_j$ is the sum of the coordinates with indices in the $i$th 
block. Using the notion introduced in \cite{nath_ruzs}, we say that a point 
$a\in\No^k$ is $P$-\emph{substantial} (with respect to $\chi$) if it is the lexicographically minimum element in the set 
$$
\{b\in\No^k:\;\chi(b)=\chi(a), \|b\|_P=\|a\|_P\}. 
$$
Note that every nonempty intersection of a color class with the set $\{x\in\No^k:\;
\|x\|_P=(n_1,\dots,n_l)\}$ (for $l=1$ this is the dilation $n_1P$ where $P$ is the unit simplex) 
contains exactly one $P$-substantial point. $P$-substantial points are representatives which 
enable us to count the color classes.

\begin{corollary}\label{NRgf}
Let $P$ be a partition of $[k]$ into $l$ blocks, $\chi\colon\No^k\to X$ be an additive coloring 
and $S\subset\No^k$ be the set of $P$-substantial points. Then  
$$
F_S(x_1,\dots,x_k)=\frac{r(x_1,\dots,x_k)}{(1-x_1)\dotsb (1-x_k)}
$$
where $r(x_1,\dots,x_k)$ is an integral polynomial.
\end{corollary}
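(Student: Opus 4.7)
The plan is to reduce Corollary \ref{NRgf} directly to Theorem \ref{stanleyho} by showing that the set $S$ of $P$-substantial points is a lower ideal in $(\No^k,\le)$. Equivalently, I would show that the complement $T = \No^k \setminus S$ of non-substantial points is an upper ideal, i.e., closed under adding elements of $\No^k$.

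The main step is the following observation: if $a \in T$ and $c \in \No^k$, then $a + c \in T$. Indeed, since $a$ is non-substantial, there exists $b \in \No^k$ with $b <_{lex} a$, $\chi(b) = \chi(a)$, and $\|b\|_P = \|a\|_P$. Consider $b + c$. By shift-stability of the additive coloring $\chi$, we have $\chi(b+c) = \chi(a+c)$. Since $\|\cdot\|_P$ is additive, $\|b+c\|_P = \|b\|_P + \|c\|_P = \|a\|_P + \|c\|_P = \|a+c\|_P$. Finally, adding the same vector $c$ to both sides of a lexicographic inequality preserves it, so $b + c <_{lex} a + c$. Thus $a + c$ is witnessed non-substantial by $b + c$, proving $T$ is an upper ideal. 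Hence $S$ is a lower ideal.

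The conclusion is then immediate from Theorem \ref{stanleyho}: since $S$ is a lower ideal in $(\No^k,\le)$,
$$
F_S(x_1,\dots,x_k) = \frac{r(x_1,\dots,x_k)}{(1-x_1)\cdots(1-x_k)}
$$
for some integral polynomial $r$.

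The only delicate point is verifying that lexicographic order is preserved under translation; this is a standard fact about lex order on $\No^k$, since comparing $b$ with $a$ and comparing $b+c$ with $a+c$ involves the same coordinatewise differences. No obstacle is expected beyond carefully confirming that the witness for non-substantiality carries through addition, which is exactly where additivity (as opposed to merely being a function) is used.
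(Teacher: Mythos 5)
Your proof is correct and follows essentially the same route as the paper: reduce to Theorem~\ref{stanleyho} by showing that the non-$P$-substantial points form an upper ideal, using shift-stability of $\chi$, additivity of $\|\cdot\|_P$, and translation-invariance of the lexicographic order to propagate the witness. The paper writes the added vector as $b-a$ for $b\ge a$ rather than an arbitrary $c\in\No^k$, but this is the identical argument.
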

\begin{proof}
In view of the previous theorem, it suffices to show that $P$-substantial points form 
a lower ideal or, equivalently, that their complement is an upper ideal. 
The latter way is a more natural choice. Let $b\in\No^k$ be any point 
such that $b\ge a$ for a non-$P$-substantial point $a\in\No^k$. 
There is a point $a'\in\No^k$ satisfying $\chi(a')=\chi(a)$, 
$\|a'\|_P=\|a\|_P$, and $a'<_{lex}a$. Consider the point 
$b'=a'+(b-a)$. We have $\chi(b')=\chi(b)$ by the additivity (indeed, shift-stability) of 
$\chi$, and  $\|b'\|_P=\|a'\|_P+\|b-a\|_P=\|a\|_P+\|b-a\|_P=\|b\|_P$ and $b'<_{lex}b$ by the properties of addition in $(\No^k,+)$. Thus $b$ is not $P$-substantial either.    
\end{proof}
\begin{theorem}[Nathanson and Ruzsa \cite{nath_ruzs}]\label{multivar}
Let $A_1,\dots,A_l$ be finite sets in a semigroup $(G,+)$. There exist 
a constant $c>0$ and an integral polynomial $p\in\Z[x_1,\dots,x_l]$ such that if 
$n_1,\dots,n_l\in\N$ are all bigger than $c$, then
$$
|n_1*A_1+\cdots+n_l*A_l|=p(n_1,\dots,n_l).
$$
\end{theorem}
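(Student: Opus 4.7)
The plan is to reduce Theorem~\ref{multivar} to a coefficient-extraction from a rational generating function, using Corollary~\ref{NRgf} and the machinery of Section~1.3. Concretely, write each $A_i = \{a_{i,1},\dots,a_{i,k_i}\}$, set $k = k_1+\cdots+k_l$, and consider the additive coloring $\chi\colon \No^k \to G$ sending the tuple $v = (v_{i,j})_{i,j}$ to $\sum_{i,j} v_{i,j} a_{i,j}$ (additivity is immediate because $G$ is commutative). Let $P = \{P_1,\dots,P_l\}$ be the partition of $[k]$ where $P_i$ indexes the coordinates $v_{i,\cdot}$ belonging to $A_i$, so that $\|v\|_P = (n_1,\dots,n_l)$ picks out the tuples whose coordinates in the $i$th block sum to $n_i$.

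The key bijective observation is that the image
\[
\chi\!\left(\{v\in\No^k : \|v\|_P = (n_1,\dots,n_l)\}\right)
\]
is precisely $n_1*A_1 + \cdots + n_l*A_l$, and by definition of $P$-substantial points, every color class appearing on the left contains a unique $P$-substantial representative. Hence, letting $S\subset\No^k$ denote the set of $P$-substantial points,
\[
|n_1*A_1+\cdots+n_l*A_l| \;=\; \#\{a\in S : \|a\|_P = (n_1,\dots,n_l)\}.
\]

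Now I would feed $F_S$ into the machinery. By Corollary~\ref{NRgf}, $F_S(x_1,\dots,x_k) = r(x)/\prod_{i=1}^k(1-x_i)$ with $r\in\Z[x_1,\dots,x_k]$. Perform the $P$-substitution $x_i := y_j$ for $i\in P_j$ to obtain $G(y_1,\dots,y_l)$, whose coefficient $[y_1^{n_1}\cdots y_l^{n_l}]\,G$ is exactly the sumset cardinality above. By Lemma~\ref{bsubst}, $G$ retains the form $s(y)/\prod_{j=1}^l(1-y_j)^{f_j}$ with $s\in\Z[y_1,\dots,y_l]$. Applying Lemma~\ref{event_poly} in the case $l=0$ (i.e., no variables are fixed in advance) gives a constant $c>0$ and a polynomial $p$ such that $[y_1^{n_1}\cdots y_l^{n_l}]\,G = p(n_1,\dots,n_l)$ whenever all $n_j > c$, which is the desired conclusion. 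The integrality of the coefficients of $s$, together with the binomial expansion used inside the proof of Lemma~\ref{event_poly}, yields the required integrality statement for $p$.

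The main obstacle is conceptual rather than technical: one must recognize that counting color classes of $\chi$ at a prescribed multi-level $(n_1,\dots,n_l)$ is the same as counting $P$-substantial points at that multi-level, so that the sumset enumeration becomes coefficient extraction from a single rational series in $k$ variables that does not depend on $(n_1,\dots,n_l)$. Once this identification is in place, rationality (Corollary~\ref{NRgf}), stability under $P$-substitution (Lemma~\ref{bsubst}), and eventual polynomiality of coefficients (Lemma~\ref{event_poly}) combine mechanically to finish the argument.
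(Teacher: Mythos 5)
Your proposal is correct and follows essentially the same route as the paper: encode the problem via the associated additive coloring on $\No^k$, count $P$-substantial points, apply Corollary~\ref{NRgf} to obtain the rational form of $F_S$, then the $P$-substitution (Lemma~\ref{bsubst}) and coefficient extraction (Lemma~\ref{event_poly}). The only difference is that you spell out the bijection between color classes at level $(n_1,\dots,n_l)$ and $P$-substantial points a bit more explicitly, which the paper leaves implicit after its discussion preceding Corollary~\ref{NRgf}.
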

\begin{proof}
Let $A=(a_1,\dots,a_k)$ be a fixed ordering of all elements appearing in the sets 
$A_1,\dots,A_l$ (taken with their multiplicities, so $k=|A_1|+\dots+|A_l|$) and $P$ be 
the corresponding partition of $[k]$ into $l$ blocks. Let $\chi\colon\No^k\to G$ be the coloring associated with $A$ and $S\subset\No^k$ be the corresponding set of $P$-substantial points. 
Let $G\in\R[[y_1,\dots,y_l]]$ be the power series obtained from $F_S(x_1,\dots,x_k)$ 
by the $P$-substitution. Then
$$
|n_1*A_1+\cdots+n_l*A_l|=[y_1^{n_1}\dots y_l^{n_l}]\;G.
$$
The result now follows by Corollary~\ref{NRgf} and by Lemmas~\ref{event_poly} and \ref{bsubst}. 
\end{proof}

\noindent
Extending Khovanski\u\i's original algebraic argument, Nathanson \cite{nath} proved 
a multivariate generalization of part 2 of Theorem~\ref{summnoz}. 

\begin{theorem}[Nathanson \cite{nath}]\label{affchov}
Let $A_1,\dots,A_{l+1}$ be finite sets in a semigroup $(G,+)$. There exist a 
constant $c>0$ and a polynomial $p\in\Z[x_1,\dots,x_l]$ such that if
$n_1,\dots,n_l\in\N$ are all bigger than $c$, then
$$
|n_1*A_1+\cdots+n_l*A_l+A_{l+1}|=p(n_1,\dots,n_l).
$$
\end{theorem}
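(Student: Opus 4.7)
The plan is to reduce the claim to Lemma~\ref{event_poly}, mirroring the proof of Theorem~\ref{multivar} and treating $A_{l+1}$ as a block that contributes exactly one summand. Let $A = (a_1, \dots, a_k)$ be a fixed ordering of all elements of $A_1, \dots, A_{l+1}$ (counted with their multiplicities across the sets), so $k = |A_1| + \cdots + |A_{l+1}|$, and let $P = \{P_1, \dots, P_{l+1}\}$ be the partition of $[k]$ that records which of the original sets each coordinate belongs to. Write $\chi\colon \No^k \to G$ for the additive coloring associated with $A$ and $S \subset \No^k$ for its set of $P$-substantial points.

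The key observation is that if one sets
$$
T_n = \{v \in \No^k : \|v\|_{P_i} = n_i \text{ for } 1 \le i \le l,\ \|v\|_{P_{l+1}} = 1\},
$$
then $n_1 * A_1 + \cdots + n_l * A_l + A_{l+1} = \chi(T_n)$, because $\|v\|_{P_{l+1}} = 1$ forces exactly one coordinate of $v$ inside the $(l+1)$-th block to equal $1$, which encodes a choice of a single element of $A_{l+1}$. Every color class meets $T_n$ in a set of points with common $\|\cdot\|_P$-value $(n_1, \dots, n_l, 1)$, and therefore contains a unique $P$-substantial point, which necessarily lies in $T_n$ itself. Hence $|n_1 * A_1 + \cdots + n_l * A_l + A_{l+1}| = |S \cap T_n|$.

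Now apply the $P$-substitution to $F_S$ to obtain $H \in \R[[y_1, \dots, y_{l+1}]]$, so that the quantity to be computed is $[y_1^{n_1} \cdots y_l^{n_l} y_{l+1}^{1}]\,H$. By Corollary~\ref{NRgf}, $F_S = r(x)/\prod_{i=1}^{k}(1-x_i)$ with $r$ an integral polynomial, and by Lemma~\ref{bsubst}, $H$ has the rational shape required by Lemma~\ref{event_poly}. Applying Lemma~\ref{event_poly} to $H$---using the remark after the lemma to swap the roles of fixed versus free variables, so that $y_{l+1}$ plays the fixed role at exponent~$1$---yields a constant $c > 0$ and a polynomial $p$ such that for all $n_1, \dots, n_l > c$,
$$
|n_1 * A_1 + \cdots + n_l * A_l + A_{l+1}| = p(n_1, \dots, n_l),
$$
with integrality of the coefficients following exactly as in Theorem~\ref{multivar}. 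The only step requiring real care is the reformulation in the second paragraph (checking that $P$-substantial points in $T_n$ are in bijection with the color classes appearing in the sumset); beyond that, the argument is a direct application of the generating-function toolbox, and the fact that $A_{l+1}$ appears ``once'' rather than ``$n_{l+1}$ times'' is absorbed without fuss by freezing one substitution variable at exponent~$1$.
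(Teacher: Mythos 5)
Your proof is correct and follows essentially the same route as the paper: build the associated coloring and partition over all $l+1$ sets, use $P$-substantial points and Corollary~\ref{NRgf}, perform the $P$-substitution, read off the coefficient $[y_1^{n_1}\cdots y_l^{n_l}y_{l+1}^{1}]$, and conclude via Lemmas~\ref{bsubst} and \ref{event_poly}. The paper merely compresses this into ``almost identical to the proof of Theorem~\ref{multivar}''; you have spelled out the bookkeeping (the set $T_n$ and the bijection with substantial points), but the argument is the same.
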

\begin{proof}
The proof is almost identical to the proof of Theorem~\ref{multivar}. We again see that
$$
|n_1*A_1+\cdots+n_l*A_l+A_{l+1}|=[y_1^{n_1}\dots y_l^{n_l}y_{l+1}]\;G
$$
and use Corollary~\ref{NRgf} and Lemmas~\ref{event_poly} and \ref{bsubst}.
\end{proof}

\noindent
The last ingredient needed for the proof of Theorem~\ref{nasepoly} is a geometric lemma. Before we state the lemma, let us point out some observations about multiples of polytopes. Let $P\subset\R^k$ be a polytope, $n\in\No$, and  $\alpha_1,\dots,\alpha_n\in\R$ be nonnegative coefficients. Clearly, $nP\subset n*P$. On the other hand,
representing points in $P$ as convex combinations of the vertices of $P$, we deduce 
the following set inclusion
\begin{equation}\label{eq-incl1}
\alpha_1 P+\dots+\alpha_n P\subset(\alpha_1+\dots+\alpha_n)P.
\end{equation}
In particular, $n*P\subset nP$ and thus $n*P=nP$. As a corollary, we obtain another set inclusion
\begin{equation}\label{eq-incl2}
(\alpha_1 P\cap\Z^k)+\dots+(\alpha_n P\cap\Z^k)\subset (\alpha_1+\dots+\alpha_n)P\cap\Z^k.
\end{equation}
In particular, $n*(P\cap\Z^k)\subset nP\cap\Z^k$. 
The opposite inclusion in general does not hold. To get equality in some form also 
for lattice points, we use Carath\'eodory's theorem. This theorem says that if a point $a$ in $\R^k$ is in the convex hull of a set of points 
$M$, then $a$ can be expressed as a convex combination of at most $k+1$ points of the set $M$ 
(see, e.g., Matou\v sek \cite{mato}). 

\begin{lemma}\label{geomlemma}
Let $k\in\N$ and $P\subset\R^k$ be a lattice polytope. 
Then for every $n\in\N$, $n\ge k$, we have in $(\Z^k,+)$ the identity 
$$
nP\cap\Z^k=(n-k)*(P\cap\Z^k)+(kP\cap\Z^k).
$$
\end{lemma}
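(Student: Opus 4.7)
The plan is to prove the two inclusions separately. The inclusion $\supseteq$ is immediate from~(\ref{eq-incl2}): applying that inclusion with $n-k$ copies of coefficient~$1$ and a single copy of coefficient~$k$ shows that $(n-k)*(P\cap\Z^k)+(kP\cap\Z^k)\subseteq nP\cap\Z^k$. The substance of the lemma is the forward inclusion.

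For $\subseteq$, fix a lattice point $a\in nP$. Since $a/n\in P$ and $P$ is the convex hull of its (lattice) vertices, Carath\'eodory's theorem yields vertices $v_0,\dots,v_k$ of $P$ and nonnegative reals $\lambda_0,\dots,\lambda_k$ with $\sum\lambda_i=1$ and $a/n=\sum\lambda_i v_i$. Setting $\mu_i=n\lambda_i$, we have $a=\sum\mu_i v_i$ and $\sum\mu_i=n$. I would then split $\mu_i=m_i+\varepsilon_i$ into the integer part $m_i=\lfloor\mu_i\rfloor$ and fractional part $\varepsilon_i\in[0,1)$. The key arithmetic observation is that $s:=\sum\varepsilon_i=n-\sum m_i$ is a nonnegative integer, and because there are only $k+1$ terms each strictly below~$1$, we have $0\le s\le k$.

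Since $\sum m_i=n-s\ge n-k$, I can choose nonnegative integers $\alpha_i,\beta_i$ with $\alpha_i+\beta_i=m_i$, $\sum\alpha_i=n-k$, and $\sum\beta_i=k-s$, and then rewrite
$$
a=\sum_i\alpha_i v_i+\Bigl(\sum_i\beta_i v_i+\sum_i\varepsilon_i v_i\Bigr).
$$
The first summand is a sum of $n-k$ lattice points of $P$, hence lies in $(n-k)*(P\cap\Z^k)$. The second has nonnegative real coefficients totalling $(k-s)+s=k$, so it is $k$ times a convex combination of vertices of $P$ and therefore lies in $kP$; it is also a lattice point, being the difference $a-\sum_i\alpha_i v_i$ of two lattice points. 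Thus it belongs to $kP\cap\Z^k$, which completes the inclusion.

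The main delicate point is the simultaneous bookkeeping that makes the ``remainder'' $\sum_i\beta_i v_i+\sum_i\varepsilon_i v_i$ both a lattice point and a point of $kP$. This hinges on the sum of fractional parts being an integer bounded by~$k$, which is exactly what the Carath\'eodory bound of $k+1$ vertices delivers and which matches the hypothesis $n\ge k$; without the Carath\'eodory truncation there would be no a priori control on $\sum\varepsilon_i$, and the splitting $\sum\alpha_i=n-k$ would not necessarily be achievable with $\alpha_i\ge 0$.
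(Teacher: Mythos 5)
Your proof is correct and follows essentially the same route as the paper's: both apply Carath\'eodory's theorem to express the point as a combination of at most $k+1$ vertices, split the coefficients into integer and fractional parts, observe that the sum of fractional parts is a nonnegative integer at most $k$, and then redistribute the integer parts so that exactly $n-k$ unit contributions remain outside the ``remainder'' living in $kP\cap\Z^k$. The paper phrases the last step as ``merging $k-c$ of the unit summands with $w$'', while you carry out the same redistribution explicitly via the integers $\alpha_i,\beta_i$; the substance is identical, and your identification of the two delicate points (the Carath\'eodory bound controlling $\sum\varepsilon_i$, and $n\ge k$ making the split feasible) matches the paper's argument exactly.
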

\begin{proof}
Let $v_1,\dots, v_r$ be the vertices of $P$ and let $p\in nP\cap\Z^k$
with $n\in\N$ and $n\ge k$. Clearly, $p$ is in the convex hull of the points
$nv_1,\dots, nv_r$. By Carath\'eodory's theorem, $p$ is a convex combination of 
at most $k+1$ of these points. Hence 
\begin{eqnarray*}
p&=&\beta_1nw_1+\dots+\beta_jnw_j,\ \mbox{ where }\ \beta_i\ge 0\ \mbox{ and }\ 
 \beta_1+\dots+\beta_j=1,\\
&=&n_1w_1+\cdots+n_jw_j+w,
\end{eqnarray*}
where $n_i=\lfloor\beta_i n\rfloor\in\No$, $j\le k+1$, $w_1,\dots,w_j$ 
are some distinct vertices of $v_1,\dots,v_r$, and 
$$
w=\alpha_1 w_1+\dots+\alpha_j w_j,\ \mbox{ where }\  
\alpha_i=\beta_in-\lfloor\beta_i n\rfloor\in[0,1). 
$$
Since $w=p-(n_1w_1+\cdots+n_jw_j)$, we see that $w$ is a lattice point. 
By~\eqref{eq-incl1}, $w\in(\alpha_1+\dots+\alpha_j)P=cP$. We have 
$0\le c=\alpha_1+\dots+\alpha_j<j\le k+1$ and 
$c=\alpha_1+\dots+\alpha_j=n-(n_1+\cdots+n_j)\in\No$. Thus $c\le k$.
We conclude that $w\in cP\cap\Z^k$ where $c\in\No$, $c=n-(n_1+\cdots+n_j)$, and $c\le k$.

We split $n_1w_1+\cdots+n_jw_j$ in the individual $n_1+\cdots+n_j=n-c$ summands, 
each of them equal to some $w_i$, and merge $k-c$ of them with 
$w$ so that we obtain a point $z\in kP\cap\Z^k$ (using the inclusion~\eqref{eq-incl2} above). 
Thus we get the expression
$$
p=z_1+\cdots+z_{n-k}+z
$$ 
where $z_i\in P\cap\Z^k$ (in fact, $z_i\in\{v_1,\dots,v_r\}$) and $z\in kP\cap\Z^k$. This 
shows that 
$$
nP\cap\Z^k\subset(n-k)*(P\cap\Z^k)+(kP\cap\Z^k). 
$$
The opposite inclusion follows from~\eqref{eq-incl2}.
\end{proof}

We are ready to prove Theorem~\ref{nasepoly}.
     
\begin{proof}[Proof of Theorem~\ref{nasepoly}]
We consider the semigroup 
of color classes $(\chi(\No^k),+)$ and its subsets $A=\chi(P\cap\No^k)$ and 
$B=\chi(kP\cap\No^k)$. By Lemma~\ref{geomlemma},
$$
|\chi(nP\cap\No^k)|=|(n-k)*A+B|.
$$
By part 2 of Theorem \ref{summnoz} (or by Theorem~\ref{affchov} or by 
Theorem~\ref{stron_ev_poly_chov} in the next subsection), this quantity is 
for big $n$ a polynomial in $n-k$ and hence a polynomial in $n$.
\end{proof}

\bigskip
We generalize Theorem~\ref{nasepoly} to rational polytopes. Our argument is 
based on the following generalization of Lemma~\ref{geomlemma}.

\begin{lemma}\label{geomlemmamodm}
Let $k\in\N$ and let $P\subset\R^k$ be a rational polytope. Let $m\in\N$ be such that $mP$ is a lattice polytope. 
If $n\in\N$ satisfies $n\ge mk$ and is congruent to 
$r\in\{0,1,\dots,m-1\}$ modulo $m$, then we have the identity 
\[
nP\cap \Z^k=\frac{n-mk-r}{m}*(mP\cap \Z^k)+((mk+r)P\cap\Z^k).
\]
\end{lemma}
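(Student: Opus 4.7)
The plan is to mimic the proof of Lemma~\ref{geomlemma} exactly, but with $mP$ playing the role of the ``unit lattice polytope'' and the residue $r$ absorbed into the leftover term. The hypothesis ``$mP$ is a lattice polytope'' means that each vertex $v_i$ of $P$ satisfies $mv_i\in\Z^k$; this is what will let us peel off copies of $mv_i$ from a Carath\'eodory decomposition and still land on lattice points.

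Concretely, let $v_1,\dots,v_s$ be the vertices of $P$ and fix $p\in nP\cap\Z^k$. As before, $p$ lies in the convex hull of $nv_1,\dots,nv_s$, so by Carath\'eodory's theorem there exist distinct vertices $w_1,\dots,w_j$ with $j\le k+1$ and reals $\beta_i\ge 0$ summing to $1$ such that $p=\beta_1 n w_1+\dots+\beta_j n w_j$. For each $i$ write $\beta_i n=m s_i+\gamma_i$ with $s_i=\lfloor\beta_i n/m\rfloor\in\No$ and $0\le\gamma_i<m$. Then
\[
p=s_1(mw_1)+\dots+s_j(mw_j)+w,\qquad w:=\gamma_1 w_1+\dots+\gamma_j w_j.
\]
Since $mw_i\in\Z^k$, the point $w=p-\sum_i s_i(mw_i)$ is a lattice point, and by \eqref{eq-incl1} it lies in $cP$ where $c:=\gamma_1+\dots+\gamma_j$.

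The key arithmetic observation is that $c=n-m(s_1+\dots+s_j)$, so $c$ is a \emph{non-negative integer} with $c\equiv n\equiv r\pmod m$, and $0\le c<jm\le(k+1)m$. The integers in $[0,(k+1)m)$ that are congruent to $r$ modulo $m$ are $r,m+r,\dots,km+r$, hence $c\le mk+r$. Consequently $t:=(mk+r-c)/m$ is a non-negative integer, and the hypothesis $n\ge mk$ (together with $n\equiv r\pmod m$) forces $n\ge mk+r$, which gives $s_1+\dots+s_j=(n-c)/m\ge t$. We may therefore redistribute to write $s_i=s_i'+t_i$ with $s_i',t_i\in\No$ and $\sum_i t_i=t$, and then absorb the $t$ extra $mw_i$-summands into $w$: by \eqref{eq-incl2} the point $z:=w+\sum_i t_i(mw_i)$ lies in $(c+tm)P\cap\Z^k=(mk+r)P\cap\Z^k$. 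What remains, $\sum_i s_i'(mw_i)$, is an element of $\frac{n-mk-r}{m}\ast(mP\cap\Z^k)$, and $p=\sum_i s_i'(mw_i)+z$ exhibits $p$ as an element of the claimed right-hand side.

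The reverse inclusion is immediate from \eqref{eq-incl2}, since $\frac{n-mk-r}{m}\cdot m+(mk+r)=n$. The only real obstacle is the modular bookkeeping in the previous paragraph: one must be careful that $c$ is an integer (not merely a real in $[0,(k+1)m)$), that $c\equiv r\pmod m$, and that there are enough $mw_i$-terms to perform the redistribution—each of which is where the hypotheses ``$mP$ is a lattice polytope'' and ``$n\ge mk$'' are used.
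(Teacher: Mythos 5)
Your proof is correct and follows the paper's argument in essence: the same Carath\'eodory decomposition $p=\sum\beta_i n w_i$, the same split of $\beta_i n$ into the largest multiple of $m$ not exceeding it plus a remainder in $[0,m)$, and the same final redistribution of $mw_i$-summands to bring the leftover term up to $(mk+r)P$. The paper phrases the redistribution tersely as ``moving several multiples of $m$ from $n_i$ to the corresponding $\alpha_i$'' while you make the bookkeeping explicit (in particular the observation that $n\ge mk$ and $n\equiv r\pmod m$ together force $n\ge mk+r$), but the underlying construction is identical.
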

\begin{proof}
The proof is an extension of that for Lemma~\ref{geomlemma} and we proceed more briefly.  
Again, it suffices to prove the set inclusion ``$\subseteq$'', the opposite one is trivial.                      
Fix a point $p\in nP\cap\Z^k$ with $n\ge mk$ congruent to $r$ modulo $m$. As in the proof of 
Lemma~\ref{geomlemma}, only replacing the integral part $n_i=\lfloor\beta_i n\rfloor$ with the largest multiple of 
$m$ not exceeding $\beta_i n$, we write $p$ as 
$$
p=\sum_{i=1}^j n_i w_i +\sum_{i=1}^j \alpha_i w_i
$$
where $j\le k+1$, $w_i$ are some vertices of $P$, $n_i\in\No$ are multiples of $m$, $\alpha_i\in[0,m)$, and 
$c=\alpha_1+\dots+\alpha_j=n-(n_1+\dots+n_j)\in\N_0$ is congruent to $r$ modulo $m$.
So $c\le mk+r$. Moving several multiples of $m$ from $n_i$ to the corresponding
$\alpha_i$, we may assume that $c=mk+r$. It follows that the first sum of the right-hand side is equal to an element 
of $\frac{n-mk-r}{m}*(mP\cap\Z^k)$, while the second sum belongs to $(mk+r)P\cap\Z^k$. 
\end{proof}

\noindent
Using this lemma and part 2 of Theorem \ref{summnoz}, we get the following theorem 
in the same way as we got Theorem~\ref{nasepoly}. We omit the proof. 

\begin{theorem}\label{naseqpoly}
Let $P$ be a polytope in $\R^k$ with vertices in $\Q_{\ge 0}^k$, let $m\in\N$ be such that
the vertices of $mP$ lie in $\No^k$, and
let $\chi\colon\No^k\to X$ be an additive coloring. Then, for $n\in\N$ 
sufficiently large, the number of colors
\[
|\chi(nP\cap\No^k)|
\] 
is given by a quasipolynomial with period $m$.
\end{theorem}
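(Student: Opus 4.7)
The plan is to follow the proof of Theorem~\ref{nasepoly} residue class by residue class, substituting Lemma~\ref{geomlemmamodm} for Lemma~\ref{geomlemma}, and then to glue the resulting $m$ polynomials into a single quasipolynomial of period $m$.

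First I would fix $r\in\{0,1,\dots,m-1\}$ and restrict to $n\in\N$ with $n\ge mk$ and $n\equiv r\pmod m$. Because $P$ has vertices in $\Q_{\ge 0}^k$, both $mP\cap\No^k$ and $(mk+r)P\cap\No^k$ are finite subsets of $\No^k$. In the commutative semigroup $(\chi(\No^k),+)$, whose addition is well-defined precisely because $\chi$ is additive, I would set
$$A=\chi(mP\cap\No^k)\quad\text{and}\quad B_r=\chi\bigl((mk+r)P\cap\No^k\bigr).$$
Applying $\chi$ to the identity of Lemma~\ref{geomlemmamodm} and invoking additivity turns it into
$$\chi(nP\cap\No^k)=\frac{n-mk-r}{m}*A+B_r,$$
so that $|\chi(nP\cap\No^k)|=\bigl|\frac{n-mk-r}{m}*A+B_r\bigr|$.

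Next I would apply part~2 of Theorem~\ref{summnoz}, in the semigroup $\chi(\No^k)$ and to the finite sets $A,B_r$, obtaining a threshold $c_r$ and a polynomial $q_r$ with $|t*A+B_r|=q_r(t)$ for every integer $t>c_r$. Substituting $t=(n-mk-r)/m$, which is a linear function of $n$ along the residue class $n\equiv r\pmod m$, yields a polynomial $p_r(n)$ agreeing with $|\chi(nP\cap\No^k)|$ whenever $n>mc_r+mk+r$. Setting $c=\max_{r}(mc_r+mk+r)$ and reading off the polynomials $p_0,\dots,p_{m-1}$ according to $n\bmod m$ produces a quasipolynomial of period dividing $m$ that coincides with $|\chi(nP\cap\No^k)|$ for all $n>c$.

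The only step that really needs care, and the one I would verify first, is the transfer of the set identity from $\No^k$ to the semigroup of colors: one has to know that $\chi(U+V)=\chi(U)+\chi(V)$ inside $\chi(\No^k)$ for finite sets $U,V\subset\No^k$, which is exactly what additivity (equivalently, shift-stability) of $\chi$ provides. Everything else, finiteness of $A$ and $B_r$, the integrality of $(n-mk-r)/m$ in each residue class, and taking a maximum over the $m$ separate thresholds $c_r$, is routine bookkeeping, so I do not expect any further obstacle.
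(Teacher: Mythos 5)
Your proposal is correct and matches the proof the paper has in mind: the paper explicitly omits the proof, noting that Theorem~\ref{naseqpoly} follows from Lemma~\ref{geomlemmamodm} and part~2 of Theorem~\ref{summnoz} ``in the same way as we got Theorem~\ref{nasepoly}.'' Your residue-class-by-residue-class argument, with $A=\chi(mP\cap\No^k)$, $B_r=\chi((mk+r)P\cap\No^k)$, the linear substitution $t=(n-mk-r)/m$, and taking the maximum of the $m$ thresholds, is precisely that omitted proof.
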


\subsection{Strongly eventually polynomial functions}

Theorems \ref{multivar} and \ref{affchov} say nothing about the values of the 
corresponding functions when some argument $n_i$ is not bigger than $c$. In 
Theorem~\ref{stron_ev_poly_chov} we give a stronger formulation using other 
notion of an eventually polynomial function in several variables, which is 
suggested by power series.

For $k,c\in\N$ we define $V(k,c)=([0,c]\cup\{\infty\})^k$; the elements of $V(k,c)$ 
are the $(c+2)^k$ words $w=w_1w_2\dots w_k$ of length $k$ such that every entry
$w_i$ is $0,\dots,c$ or $\infty$. We say that a function 
$$
f\colon\No^k\to\R
$$ 
is {\em strongly eventually polynomial} 
if there exist a $c\in\N$ and $(c+2)^k$ polynomials $p_w\in\R[x_1,\dots,x_k]$ indexed 
by the words $w\in V(k,c)$ so that for every $k$-tuple $n=(n_1,\dots,n_k)\in\No^k$ 
and the unique $w=w(n)\in V(k,c)$ determined by $w_i=n_i$ if $n_i\le c$ and 
$w_i=\infty$ if $n_i>c$, we have
$$
f(n_1,\dots,n_k)=p_{w(n)}(n_1,\dots,n_k).
$$
Said more briefly, there is a constant $c\in\N$ such that for any selection of arguments $n_i$, 
when we fix arguments not exceeding $c$, $f(n_1,\dots,n_k)$ is 
a polynomial function in the remaining arguments (which are all bigger than $c$). 
Note that for $k=1$ this
notion is identical with the usual notion of an eventually polynomial function 
$f\colon\No\to\R$ (there is a constant $c>0$ and a polynomial $p\in\R[x]$ such that $f(n)=p(n)$ for 
$n>c$). Note also that if $f\colon\No^k\to\R$ is strongly eventually polynomial
for a constant $c$, then it is strongly eventually polynomial for any larger constant.

We give a stronger version of Lemma~\ref{event_poly}.

\begin{proposition}\label{event_poly_right}
A function $f\colon\No^k\to\R$ is strongly eventually polynomial if and only if
$$
F(x_1,\dots,x_k)=\sum_{n\in\No^k}f(n)x_1^{n_1}\dotsb x_k^{n_k}
=\frac{r(x_1,\dots,x_k)}{(1-x_1)^{e_1}\dotsb(1-x_k)^{e_k}},
$$
for some $r\in\R[x_1,\dots,x_k]$ and $e_i\in\No$. 
\end{proposition}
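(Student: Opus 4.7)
The plan is to prove the two implications separately, since they have quite different flavors. The ``if'' direction will strengthen Lemma~\ref{event_poly} by making the threshold $c$ uniform in the fixed arguments, while the ``only if'' direction will assemble $F$ from the pieces $p_w$ stratum by stratum.

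For ``if'', I would expand the numerator as a finite sum of monomials, $r(x)=\sum_{b\in B}c_b\,x_1^{b_1}\cdots x_k^{b_k}$. By the calculation inside the proof of Lemma~\ref{event_poly}, each summand $c_b\,x^b/\prod_i(1-x_i)^{e_i}$ contributes
\[
c_b\prod_{i=1}^{k}\mathbf{1}[n_i\ge b_i]\binom{n_i+e_i-1-b_i}{e_i-1}
\]
to $[x^n]F$. Choose $c$ larger than every entry of every $b\in B$. For every word $w\in V(k,c)$ and every $n$ with $w(n)=w$, each indicator is identically $1$ on the free indices (where $w_i=\infty$, so $n_i>c\ge b_i$) and a fixed constant on the bounded indices; each binomial becomes either a constant or a polynomial in a single free coordinate. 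Summing the finitely many $b\in B$ then yields the required polynomial $p_w(x_1,\dots,x_k)$. The crucial point is that the same $c$ works for all $w$ simultaneously.

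For ``only if'', I would partition $\No^k$ by the value of $w(n)\in V(k,c)$ and write $F=\sum_{w\in V(k,c)}F_w$, where
\[
F_w=\left(\prod_{i\in I(w)}x_i^{w_i}\right)\sum_{\substack{(n_j)_{j\in J(w)}\\ n_j>c}}q_w\!\left((n_j)_{j\in J(w)}\right)\prod_{j\in J(w)}x_j^{n_j},
\]
with $I(w)=\{i:w_i\ne\infty\}$, $J(w)=[k]\setminus I(w)$, and $q_w$ obtained from $p_w$ by substituting $n_i=w_i$ for $i\in I(w)$. Expanding $q_w$ as a linear combination of monomials $\prod_j n_j^{d_j}$, the inner sum factors into a product over $j\in J(w)$ of one-variable tail sums $\sum_{n_j>c}n_j^{d_j}x_j^{n_j}$. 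Each tail sum differs from $\sum_{n\ge0}n^{d_j}x_j^n=A_{d_j}(x_j)/(1-x_j)^{d_j+1}$ by a polynomial, so it is rational of the form (polynomial)$/(1-x_j)^{d_j+1}$. Bringing the finitely many $F_w$ to a common denominator of the form $\prod_i(1-x_i)^{e_i}$ yields $F$ in the required shape.

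I expect the main obstacle to be mostly bookkeeping rather than conceptual: carefully verifying that the uniform $c$ in the ``if'' direction really works for all strata at once (which rests only on the finiteness of $B$), and choosing the common denominator in the ``only if'' direction to avoid spurious factors. No sophisticated ingredients appear to be needed beyond partial-fraction style manipulations of rational power series in several variables.
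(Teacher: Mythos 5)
Your proof is correct and follows essentially the same route as the paper: for the forward direction you expand $r$ into monomials, reduce to terms $x^b/\prod_i(1-x_i)^{e_i}$, and observe that a single threshold $c>\max_b b_i$ works uniformly (the paper packages this same observation as closure of strongly eventually polynomial functions under concatenative products and linear combinations); for the converse you partition $\No^k$ by the stratum $w(n)$ and rationalize each stratum's contribution, exactly as the paper does. The only minor bookkeeping gap is the degenerate case $e_i=0$, where the binomial formula $\binom{n_i+e_i-1-b_i}{e_i-1}$ should be replaced by the indicator $\mathbf{1}[n_i=b_i]$; this does not affect the argument.
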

\begin{proof}
If $f$ is strongly eventually polynomial and is represented by the polynomials 
$p_v$, $v\in V(k,c)$, we have
$$
F(x)=\sum_{n\in\No^k}f(n)x_1^{n_1}\dotsb x_k^{n_k}=
\sum_{v\in V(k,c)}\ \sum_{n\atop w(n)=v}p_v(n)x_1^{n_1}\dots x_k^{n_k}.
$$
Each inner sum is a power series which can be transformed in the form 
$r(1-x_1)^{-e_1}\dots(1-x_k)^{-e_k}$ for some $r\in\R[x_1,\dots,x_k]$ and $e_i\in\No$. 
Thus $F(x)$ has the stated form.

Suppose that $F(x)$ has the stated form. As in the proof of 
Lemma~\ref{event_poly}, we write it as a linear combination of terms of the 
type $$ \prod_{i=1}^k\frac{x_i^{b_i}}{(1-x_i)^{e_i}}, $$ where 
$b_i,e_i\in\No$. The coefficients of the power series $x^b/(1-x)^e$ form a 
univariate strongly eventually polynomial function. It is easy to see that 
the concatenative product $h\colon\No^{k+l}\to\R$ of two strongly eventually 
polynomial functions $f\colon\No^k\to\R$ and $g\colon\No^l\to\R$, defined by 
$$ h(n_1,\dots,n_{k+l})=f(n_1,\dots,n_k)g(n_{k+1},\dots,n_{k+l}), $$ is 
strongly eventually polynomial as well (as we know, we may assume that the 
constant $c$ is the same for $f$ and $g$). The same holds for the linear 
combination $\alpha f+\beta g\colon\No^k\to\R$ of two strongly eventually 
polynomial functions $f,g\colon\No^k\to\R$. From the expression of $F(x)$ as a 
linear combination of the mentioned products, it follows that the function 
$(n_1,\dots,n_k)\mapsto [x_1^{n_1}\dots x_k^{n_k}]F(x_1,\dots,x_k)$ is a 
finite linear combination of concatenative products of strongly eventually 
polynomial (univariate) functions. Thus it is strongly eventually polynomial 
as well.
\end{proof}

The following theorem is a common strengthening of Theorems~\ref{multivar} 
and \ref{affchov}, which cancels the distinction between the projective and 
affine formulations (parts 1 and 2 of Theorem \ref{summnoz}).     

\begin{theorem}\label{stron_ev_poly_chov}
Let $A_1,\dots,A_l$ be finite sets in a semigroup $(G,+)$. Then
$$
(n_1,\dots,n_l)\mapsto|n_1*A_1+\cdots+n_l*A_l|
$$
is a strongly eventually polynomial function from $\No^l$ to $\No$.
\end{theorem}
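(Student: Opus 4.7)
The plan is to use the machinery already assembled in this subsection: the statement is essentially a repackaging of Theorem~\ref{multivar} at the level of generating functions, now with the stronger conclusion extracted via Proposition~\ref{event_poly_right}.

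First I would copy the setup of the proof of Theorem~\ref{multivar}: list all elements of $A_1,\dots,A_l$ (with multiplicities) as a single sequence $A=(a_1,\dots,a_k)$ of length $k=|A_1|+\dots+|A_l|$, let $P=\{P_1,\dots,P_l\}$ be the partition of $[k]$ into the $l$ blocks corresponding to which $A_j$ an element came from, let $\chi\colon\No^k\to G$ be the additive coloring associated with $A$ (so $\chi(v)=v_1a_1+\cdots+v_ka_k$), and let $S\subset\No^k$ be the set of $P$-substantial points with respect to $\chi$. Since every nonempty intersection of a color class with the slice $\{x\in\No^k\colon\|x\|_P=(n_1,\dots,n_l)\}$ contains exactly one $P$-substantial point, I get the identity
$$
|n_1*A_1+\cdots+n_l*A_l|=[y_1^{n_1}\dotsb y_l^{n_l}]\,G(y_1,\dots,y_l),
$$
where $G\in\R[[y_1,\dots,y_l]]$ is obtained from $F_S(x_1,\dots,x_k)$ by the $P$-substitution $x_i:=y_j$ for $i\in P_j$.

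Next I would invoke Corollary~\ref{NRgf} to conclude that $F_S=r(x)/((1-x_1)\dotsb(1-x_k))$ for some integral polynomial $r$, and then Lemma~\ref{bsubst} to transfer this rational form across the $P$-substitution: $G=s(y)/((1-y_1)^{f_1}\dotsb(1-y_l)^{f_l})$ for some $s\in\R[y_1,\dots,y_l]$ and $f_i\in\No$. At this point the proof is essentially done: I apply Proposition~\ref{event_poly_right}, whose ``if'' direction says exactly that a function from $\No^l$ to $\R$ whose generating series has this rational form is strongly eventually polynomial. The image function is $\No$-valued by construction, so this gives the desired conclusion.

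There is no real obstacle; the work has already been done. The only thing worth flagging is that the stronger conclusion (as opposed to the ``all large $n_i$'' conclusion of Theorems~\ref{multivar} and \ref{affchov}) comes entirely from having upgraded Lemma~\ref{event_poly} to Proposition~\ref{event_poly_right}, which allows a single threshold $c$ to serve uniformly across all patterns of ``fixed versus large'' coordinates. In particular, no new combinatorial content is needed beyond Corollary~\ref{NRgf}; the theorem is the cleanest possible statement that the rational generating function $G$ makes available.
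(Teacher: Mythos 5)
Your proof is correct and follows exactly the same route as the paper: the paper's proof of Theorem~\ref{stron_ev_poly_chov} is literally ``almost identical to the proof of Theorem~\ref{multivar}, only we use Proposition~\ref{event_poly_right} in place of Lemma~\ref{event_poly}.'' You have correctly unpacked that one-line proof into the full argument, including the reuse of Corollary~\ref{NRgf} and Lemma~\ref{bsubst}.
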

\begin{proof}
The proof is almost identical to the proof of Theorem~\ref{multivar}, only we use
Proposition~\ref{event_poly_right} in place of Lemma~\ref{event_poly}.
\end{proof}

\subsection{Multivariate generalizations of Theorems \ref{character} and \ref{iterimage}}

Recall that for $l,c\in\N$, the set $V(l,c)$ consists of the $(c+2)^l$ words of length $l$ 
over the alphabet $\{0,\dots,c,\infty\}$ and that for $n=(n_1,\dots,n_l)\in\No^l$ the word 
$w(n)=w_1\dots w_l\in V(l,c)$ is defined by $w_i=n_i$ if $n_i\le c$ and $w_i=\infty$ if $n_i>c$.
The next theorem generalizes Theorems \ref{character} and \ref{stron_ev_poly_chov} (and thus 
in turn Theorems \ref{summnoz}, \ref{multivar}, and \ref{affchov}).  

\begin{theorem}\label{charactermulti}
For finite sets $A_1,\dots,A_l$ in a semigroup $G=(G,+)$ and a character $\psi\colon G\to\C$, there 
exist a constant $c\in\N$ and $(c+2)^l|A_1|\dotsb|A_l|$ polynomials 
$p_{w,a_1,\dots,a_l}\in\C[x_1,\dots,x_l]$, where $w\in V(l,c)$ and $a_i\in A_i$, such that 
for every $l$-tuple $n=(n_1,\dots,n_l)\in\No^l$ and the corresponding word 
$w(n)\in V(l,c)$, we have  
$$
\sum_{a\in n_1*A_1+\cdots+n_l*A_l}\psi(a)=
\sum_{a_1\in A_1,\dots,a_l\in A_l}p_{w(n),a_1,\dots,a_l}(n_1,\dots,n_l)
\psi(a_1)^{n_1}\dots\psi(a_l)^{n_l}.
$$
\end{theorem}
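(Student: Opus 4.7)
The plan is to adapt the proof of Theorem~\ref{stron_ev_poly_chov} by replacing the unweighted generating function of the $P$-substantial set by a $\psi$-weighted version. First, I will reproduce the setup of the proof of Theorem~\ref{multivar}: fix an ordering $(a_1,\dots,a_k)$ of the elements of $A_1,\dots,A_l$ with $k=|A_1|+\cdots+|A_l|$, so that the first $|A_1|$ entries list $A_1$, the next $|A_2|$ list $A_2$, and so on; let $P$ be the induced partition of $[k]$ into $l$ blocks; let $\chi\colon\No^k\to G$ be the additive coloring associated with this sequence; and let $S\subset\No^k$ be the set of $P$-substantial points. By Corollary~\ref{NRgf}, $F_S(x)=r(x)/\prod_{i=1}^k(1-x_i)$ for some polynomial $r$.

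Second, I would rewrite the target sum as a coefficient in a weighted generating function. Each nonempty intersection of a color class of $\chi$ with the slice $\{v\in\No^k\colon\|v\|_P=n\}$ contains exactly one $P$-substantial point, and because $\psi$ is a character we have $\psi(\chi(v))=\prod_i\psi(a_i)^{v_i}$; hence
\[
\sum_{a\in n_1*A_1+\cdots+n_l*A_l}\psi(a)=\sum_{v\in S,\ \|v\|_P=n}\prod_{i=1}^k\psi(a_i)^{v_i}=[y_1^{n_1}\cdots y_l^{n_l}]\,G(y),
\]
where $G$ is obtained by the $P$-substitution from the weighted series $H(x):=F_S(\psi(a_1)x_1,\dots,\psi(a_k)x_k)$. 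Substituting into the closed form for $F_S$, one sees that $H$, and therefore $G$, is a rational function whose denominator is $\prod_{j=1}^l\prod_{a\in A_j}(1-\psi(a)y_j)$.

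Third, I would decompose $G$ by partial fractions. Since its denominator factors as $\prod_{j=1}^l D_j(y_j)$ with $D_j(y_j)=\prod_{a\in A_j}(1-\psi(a)y_j)$ depending on $y_j$ alone, iterated partial fraction expansion in each variable produces a finite sum of terms indexed by tuples $(a_1,\dots,a_l)\in A_1\times\cdots\times A_l$ and exponents $e_i\in\No$, each of the form $R_{a_1,\dots,a_l}(y)/\prod_{i=1}^l(1-\psi(a_i)y_i)^{e_i}$, modulo lower-order pieces in which some variables appear only in the numerator and hence contribute nothing for large values of the corresponding $n_i$. Assuming first that $\psi(a)\neq 0$ for every $a$ involved, the change of variables $z_i:=\psi(a_i)y_i$ converts each summand into a rational function of the type handled by Proposition~\ref{event_poly_right}; applying that proposition and then translating back produces, for a common constant $c\in\N$ and every word $w\in V(l,c)$, a polynomial $p_{w,a_1,\dots,a_l}\in\C[x_1,\dots,x_l]$ such that the $y^n$-coefficient of the summand equals $p_{w(n),a_1,\dots,a_l}(n)\prod_i\psi(a_i)^{n_i}$. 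Summing these contributions over $(a_1,\dots,a_l)\in A_1\times\cdots\times A_l$ yields the desired identity.

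The hard part will be the bookkeeping in the multivariate partial fraction step and verifying that the strongly eventually polynomial refinement from Proposition~\ref{event_poly_right} is preserved uniformly across all tuples $(a_1,\dots,a_l)$, with a single constant $c$. The case $\psi(a)=0$ for some $a\in A_i$ requires only a minor adjustment: the factor $1-\psi(a)y_i$ collapses to $1$, so $y_i$ occurs in the corresponding partial fraction piece only in the numerator, which means that the coefficient vanishes for $n_i$ exceeding the degree in $y_i$ and can be absorbed harmlessly into the polynomials $p_{w,a_1,\dots,a_l}$ for words $w$ with $w_i\leq c$.
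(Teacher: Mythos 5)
Your proof is correct and follows essentially the same strategy as the paper's sketch: pull the character back to $(\No^k,+)$ via the associated coloring, realize the sum as a coefficient of a $P$-substituted rational power series with denominator $\prod_j\prod_{a\in A_j}(1-\psi(a)y_j)$, and conclude with a $\psi$-twisted version of Proposition~\ref{event_poly_right}. Your shortcut of obtaining the weighted series by substituting $x_i\mapsto\psi(a_i)x_i$ into $F_S$ is a clean alternative to the paper's direct re-derivation of $F_{S,\psi}$, and your explicit partial-fraction-plus-rescaling argument fills in exactly the detail the paper leaves implicit in its phrase ``a version of Proposition~\ref{event_poly_right} for rational power series of the form $r/\prod(1-\alpha_ix_i)^{e_i}$.''
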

\begin{proof}[Proof (Sketch)] We pull $\psi$ back to the semigroup 
$(\No^k,+)$ with the associated coloring and for $X\subset\No^k$ work with the 
power series 
$$
F_{X,\psi}(x)=\sum_{n\in X}\psi(n)x_1^{n_1}\dots x_k^{n_k}.
$$
For an orthant $O_s\subset\N_0^k$ we then have, denoting the $k$ basic 
unit vectors by $u_i$,
$$
F_{O_s,\psi}(x)=\frac{\psi(s)x_1^{s_1}\dots 
x_k^{s_k}}{(1-\psi(u_1)x_1)\dots (1-\psi(u_k)x_k)}.
$$
Thus, arguing as in the proof of Theorem \ref{stanleyho}, if $X\subset\No^k$ is a lower 
or an upper ideal, then
$$
F_{X,\psi}(x)=\frac{r(x_1,\dots,x_k)}{(1-\psi(u_1)x_1)\dots (1-\psi(u_k)x_k)}
$$
where $r$ is a polynomial whose coefficients are finite sums of $\pm$ values of $\psi$.
It follows that
$$
\sum_{a\in n_1*A_1+\cdots+n_l*A_l}\psi(a)=[y_1^{n_1}\dots y_l^{n_l}]G
$$
where $G(y)$ is obtained from such $F_{X,\psi}(x)$ by a $P$-substitution. The theorem now follows by a version of Proposition~\ref{event_poly_right} for rational 
power series of the form $r/((1-\alpha_1 x_1)^{e_1}\dots(1-\alpha_k x_k)^{e_k})$.
\end{proof}

\bigskip
In the multivariate generalization of Theorem~\ref{iterimage} we refine 
the iterated image ${\cal F}^{(n)}(B)$ by partitioning 
${\cal F}$. For a (typically infinite) set $X$, its subset $B\subset X$, a family 
${\cal F}$ of mutually commuting mappings $f\colon X\to X$, and a partition $P=\{P_1,\dots,P_l\}$ 
of ${\cal F}$ into nonempty blocks, we let
${\cal F}^{(n_1,\dots,n_l)}$ denote the set of all the functions that can be 
obtained by composing $l$ functions $f_1\circ f_2 \circ\dotsb \circ f_l$, 
where each $f_i$ is itself a composition of $n_i$ functions belonging to the 
block $P_i$, and set 
$$
{\cal F}^{(n_1,\dots,n_l)}(B)=\bigcup_{f\in {\cal F}^{(n_1,\dots,n_l)}}f(B).
$$

The next theorem generalizes Theorems~\ref{iterimage} and \ref{stron_ev_poly_chov} (and thus 
in turn Theorems \ref{summnoz}, \ref{multivar}, and \ref{affchov}).  

\begin{theorem}\label{iterimagemulti}
If $B$ is a finite subset of $X$, ${\cal F}$ is finite family of mutually commuting 
mappings from $X$ to itself, and $P=\{P_1,\dots,P_l\}$ is a partition of ${\cal F}$, then 
$$
(n_1,\dots,n_l)\mapsto|{\cal F}^{(n_1,\dots,n_l)}(B)|
$$ 
is a strongly eventually polynomial function from  $\No^l$ to $\No$.
\end{theorem}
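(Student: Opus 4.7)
The plan is to adapt the substantial-point machinery used for Theorem~\ref{multivar} and Theorem~\ref{stron_ev_poly_chov}, but now on the enlarged index set $\No^k\times[m]$, where $k=|\mathcal{F}|$ and $m=|B|$. Write $\mathcal{F}=\{f_1,\dots,f_k\}$ so that the partition $P$ of $\mathcal{F}$ corresponds to a partition of $[k]$ into $l$ blocks (still denoted $P$), list $B=\{b_1,\dots,b_m\}$, and for $(v,i)\in\No^k\times[m]$ set
$$
\chi(v,i)=(f_1^{v_1}\circ\cdots\circ f_k^{v_k})(b_i),
$$
which is well-defined because the $f_j$ mutually commute. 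Then
$$
|\mathcal{F}^{(n_1,\dots,n_l)}(B)|=|\chi(\{(v,i):\|v\|_P=(n_1,\dots,n_l),\ i\in[m]\})|,
$$
so it suffices to count distinct $\chi$-colors on each $P$-slice of $\No^k\times[m]$.

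Equip $\No^k\times[m]$ with the linear order $\prec$ defined by $(v,i)\prec(w,j)$ iff $i<j$, or $i=j$ and $v<_{lex}w$; call $(v,i)$ \emph{substantial} if it is the $\prec$-minimum of the set $\{(w,j):\chi(w,j)=\chi(v,i),\ \|w\|_P=\|v\|_P\}$. Each nonempty color class on the slice $\|v\|_P=n$ contributes exactly one substantial pair, so
$$
|\mathcal{F}^{(n_1,\dots,n_l)}(B)|=\sum_{i=1}^m \#\{v\in S_i:\|v\|_P=(n_1,\dots,n_l)\},
$$
where $S_i=\{v\in\No^k:(v,i)\text{ is substantial}\}$.

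The main point is to prove that each $S_i$ is a lower ideal in $(\No^k,\leq)$. Suppose $(v,i)$ is not substantial, witnessed by some $(w,j)\prec(v,i)$ with $\chi(w,j)=\chi(v,i)$ and $\|w\|_P=\|v\|_P$. For any $e\in\No^k$, commutativity of $\mathcal{F}$ gives
$$
\chi(v+e,i)=(f_1^{e_1}\circ\cdots\circ f_k^{e_k})(\chi(v,i))=(f_1^{e_1}\circ\cdots\circ f_k^{e_k})(\chi(w,j))=\chi(w+e,j),
$$
while $\|w+e\|_P=\|v+e\|_P$ is automatic and $(w+e,j)\prec(v+e,i)$ because either $j<i$ still holds, or $j=i$ and lex order on $\No^k$ is invariant under simultaneous translation. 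Thus $(v+e,i)$ is again not substantial, so $\No^k\setminus S_i$ is an upper ideal. This step is the only one that uses the commuting hypothesis on $\mathcal{F}$ and is the one thing that needs to be checked carefully; everything else reduces to the generating-function machinery already developed in the paper.

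By Theorem~\ref{stanleyho} each $F_{S_i}(x)=r_i(x)/((1-x_1)\cdots(1-x_k))$ with $r_i\in\Z[x_1,\dots,x_k]$. Applying the $P$-substitution to $\sum_{i=1}^m F_{S_i}(x)$ and invoking Lemma~\ref{bsubst}, we obtain a power series $H\in\R[[y_1,\dots,y_l]]$ of the form $s(y)/((1-y_1)^{g_1}\cdots(1-y_l)^{g_l})$ whose coefficient at $y_1^{n_1}\cdots y_l^{n_l}$ equals $|\mathcal{F}^{(n_1,\dots,n_l)}(B)|$. Proposition~\ref{event_poly_right} then yields the claimed strong eventual polynomiality.
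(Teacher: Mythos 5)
Your proof is correct, and it takes a genuinely cleaner route than the paper's. The paper encodes both the choice of $b_i\in B$ and the exponent vector into a single lattice $\No^{k^2}$ via a partial coloring supported on $k$ generalized orthants $C_1,\dots,C_k$; because the substantial points then fail to form an ideal in $(\No^{k^2},\le)$, the paper is forced to develop the theory of simple sets (finite unions of generalized orthants) and to invoke Theorem~\ref{zobstanleyho} to recover a rational generating function. You instead keep the $m$ copies of $\No^k$ separate (formally $\No^k\times[m]$), define the tiebreaking order $\prec$ to rank the copies, and observe that non-substantiality is preserved by translation $v\mapsto v+e$ inside each copy since $\prec$-comparison between two witnesses with fixed second coordinates is translation-invariant. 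This makes each $S_i$ a lower ideal in $(\No^k,\le)$ outright, so Theorem~\ref{stanleyho} already applies to each $F_{S_i}$, and the total count is just the sum of the $m$ rational series. The simple-set machinery and Corollary~\ref{booleanalg} are bypassed entirely. The only thing you lose is that the paper's argument doubles as an illustration of Theorem~\ref{zobstanleyho}, which is independently interesting to the authors; your argument buys a shorter, more elementary proof that relies only on the Stanley-type ideal result. All the steps check out: the reduction from iterated images to colors on a $P$-slice uses commutativity correctly, the $\prec$-minimum exists because each slice $\|w\|_P=\|v\|_P$ with $j\in[m]$ is finite, each color class on a slice contributes exactly one substantial pair, and the preservation of non-substantiality under $v\mapsto v+e$ is established by exhibiting the shifted witness $(w+e,j)$.
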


\noindent
For the combinatorial proof we need an extension of Theorem~\ref{stanleyho} to sets more general 
than lower or upper ideals. For $k\in\N$, $I\subset[k]$, and $s\in\No^k$, the 
{\em generalized orthant} $O_{s,I}\subset\No^k$ is defined by
$$
O_{s,I}=\{x\in\No^k:\;i\in I\Rightarrow x_i=s_i,\; i\not\in I\Rightarrow x_i\ge s_i\}.
$$
An empty set is also a generalized orthant. A subset $S\subset\No^k$ is {\em simple} if 
it is a finite union of generalized orthants. In particular, every finite set is simple.
So is every upper ideal and, as we shall see in a moment, every lower ideal. 

\begin{lemma}\label{prasjgenorth}
Intersection of any system of generalized orthants is a generalized orthant. Complement of 
a generalized orthant to $\No^k$ is a simple set.
\end{lemma}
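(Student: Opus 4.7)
The plan is to handle the two assertions by direct analysis of the defining conditions of a generalized orthant, treating each coordinate separately.

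For the first assertion, consider a system $\{O_{s^{\alpha},I^{\alpha}}\}_{\alpha\in A}$ of generalized orthants and let $T$ denote their intersection. Fix a coordinate $i\in[k]$ and split the index set $A$ into $A_i^{=}=\{\alpha:i\in I^{\alpha}\}$ (the orthants imposing the equality $x_i=s^{\alpha}_i$) and $A_i^{\ge}=A\setminus A_i^{=}$ (those imposing only $x_i\ge s^{\alpha}_i$). I would define a new index set $I\subseteq[k]$ and a new vector $s\in\No^k$ as follows: $i\in I$ precisely when $A_i^{=}\ne\emptyset$, in which case I set $s_i$ to the common value of $s^{\alpha}_i$ for $\alpha\in A_i^{=}$ (if these values disagree, $T$ is empty and we are done, since the empty set counts as a generalized orthant). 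For $i\notin I$ I set $s_i=\sup_{\alpha\in A_i^{\ge}}s^{\alpha}_i$ (where an empty sup is $0$); if this supremum is infinite, then again $T=\emptyset$. In any remaining case one checks that $x\in T$ iff $x_i=s_i$ for $i\in I$ and $x_i\ge s_i$ for $i\notin I$, also taking into account the lower bounds coming from $A_i^{\ge}$ when $i\in I$ (which may force $T=\emptyset$ if the fixed value is too small). Thus $T=O_{s,I}$.

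For the second assertion, I describe $\No^k\setminus O_{s,I}$ explicitly as a finite union of generalized orthants. A point $x\in\No^k$ fails to lie in $O_{s,I}$ iff at least one of the defining inequalities is violated, i.e.\ either $x_i\ne s_i$ for some $i\in I$, or $x_i<s_i$ for some $i\notin I$. For each coordinate $i$ and each value $j\in\{0,1,\dots,s_i-1\}$, the set $\{x\in\No^k:x_i=j\}$ is a generalized orthant (fix $x_i=j$, leave all other coordinates free by setting their lower bounds to $0$). When $i\in I$, I also need the set $\{x\in\No^k:x_i\ge s_i+1\}$, which is a generalized orthant as well. Taking the finite union of these generalized orthants over all offending coordinates and values gives the full complement, so $\No^k\setminus O_{s,I}$ is simple.

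The main obstacle is purely bookkeeping: in part (i) one has to be careful when mixing equality and inequality constraints at the same coordinate and to handle infinite systems (where a supremum of the lower-bound values in a non-fixed coordinate can be infinite, forcing $T=\emptyset$); once the case analysis is clean, both statements follow directly from the definition.
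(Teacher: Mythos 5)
Your proof is correct and follows essentially the same approach as the paper's: both arguments analyze the defining constraints coordinate by coordinate, and your explicit decomposition of $\No^k\setminus O_{s,I}$ into the generalized orthants $\{x:x_i=j\}$ for $j<s_i$ (all $i$) and $\{x:x_i\ge s_i+1\}$ (for $i\in I$) is precisely the paper's decomposition in its $u(i,j)$ notation. You are slightly more explicit than the paper about the degenerate cases (conflicting equality constraints, an unbounded supremum of lower bounds in an infinite system, a fixed value below some lower bound), which the paper subsumes in the phrase ``a condition of the type $x_i\in\emptyset$''; this extra care is welcome but not a different method.
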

\begin{proof}
A $k$-tuple $x$ of $\No^k$ lies in the intersection of the system $O_{s(j),I(j)}$, $j\in J$, of
nonempty generalized orthants
iff for every $i\in[k]$ the $i$th coordinate $x_i$ satisfies for every $j\in J$ the condition 
imposed by the membership $x\in O_{s(j),I(j)}$. These conditions have form
$x_i\in\{s_{i,j}\}$ or $x_i\in[s_{i,j},+\infty)$ for some $s_{i,j}\in\No$. 
Intersection (conjunction) of these conditions over all $j\in J$ is a condition of the type 
$x_i\in\emptyset$ or $x_i\in\{s_i\}$ or $x_i\in[s_i,+\infty)$ for some $s_i\in\No$. This is 
true for every $i\in[k]$. Thus $\bigcap_{j\in J}O_{s(j),I(j)}$ is an empty set or a nonempty
generalized orthant.

Let $O=O_{s,I}\subset\No^k$ be a generalized orthant. We have $x\in\No^k\backslash O$ iff 
there exists an $i\in[k]$ such that (i) $i\in I$ and $x_i$ satisfies $x_i\in[s_i+1,+\infty)$ or 
$x_i\in[0,s_i-1]$ or such that (ii) $i\not\in I$ and $x_i$ satisfies $x_i\in[0,s_i-1]$. 
Let $u(i,j)\in\No^k$, for $i\in[k]$ and $j\in\No$, denote the $k$-tuple with all coordinates zero except the $i$th one which is equal to $j$. It follows that  
$\No^k\backslash O$ is the union of the generalized orthants 
$$
O_{u(i,s_i+1),\emptyset},\ i\in I;\ O_{u(i,j_i),\{i\}},\ i\in [k]\ \mbox{ and }\ j_i\in[0,s_i-1]
$$ 
(if $s_i=0$, no $O_{u(i,j_i),\{i\}}$ is needed). Thus $\No^k\backslash O$ is a simple set.
\end{proof}

\begin{corollary}\label{booleanalg}
The family of simple sets in $\No^k$ contains the sets $\emptyset$ and $\No^k$ and is 
closed under taking finite unions, finite intersections, and complements. Hence it forms 
a boolean algebra. 
\end{corollary}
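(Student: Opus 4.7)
The plan is to use Lemma~\ref{prasjgenorth} to reduce all four closure properties to straightforward set-theoretic manipulations, and then verify the two base cases by hand.

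First I would handle the base cases: $\emptyset$ is a generalized orthant by definition, and $\No^k=O_{0,\emptyset}$ (with $0=(0,\dots,0)\in\No^k$) is also a generalized orthant, so both are simple. Closure under finite unions is immediate from the definition of a simple set as a finite union of generalized orthants, since a finite union of finite unions of generalized orthants is again such a union.

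For closure under finite intersections, I would expand using distributivity: if $S=\bigcup_{i=1}^m O_i$ and $T=\bigcup_{j=1}^n O'_j$ with each $O_i,O'_j$ a generalized orthant, then
\[
S\cap T=\bigcup_{i=1}^m\bigcup_{j=1}^n(O_i\cap O'_j),
\]
and by the first part of Lemma~\ref{prasjgenorth} each $O_i\cap O'_j$ is itself a generalized orthant (possibly empty). Hence $S\cap T$ is a finite union of generalized orthants, that is, a simple set. Iterating gives closure under arbitrary finite intersections.

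For closure under complements, I would invoke De Morgan: if $S=\bigcup_{i=1}^m O_i$ is simple, then
\[
\No^k\setminus S=\bigcap_{i=1}^m(\No^k\setminus O_i).
\]
By the second part of Lemma~\ref{prasjgenorth}, each $\No^k\setminus O_i$ is a simple set, and by the closure under finite intersections just established, their intersection is again simple. This completes all four closure properties and shows that the simple sets form a boolean algebra. The corollary thus follows essentially for free from Lemma~\ref{prasjgenorth}; there is no real obstacle beyond organizing the De Morgan reduction so that complements are handled \emph{after} intersections, since the complement argument relies on intersection-closure.
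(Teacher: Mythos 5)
Your proof is correct and fills in exactly the routine set-theoretic steps that the paper compresses into the one-line remark "this follows by the previous lemma and by elementary set identities involving unions, intersections and complements." The ordering you note (establish intersection-closure before complement-closure, so that De Morgan can be applied) is the right way to organize the argument, and your verification of the base cases and use of both parts of Lemma~\ref{prasjgenorth} match the paper's intent precisely.
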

\begin{proof}
This follows by the previous lemma and by elementary set identities involving unions, intersections and complements. 
\end{proof}

\noindent
The family of simple sets is in general not closed to infinite unions nor to infinite intersections. 

The next theorem is an extension of Theorems \ref{dolniid} and \ref{stanleyho}. It 
characterizes the sets $S\subset\No^K$, for which these theorems hold.

\begin{theorem}\label{zobstanleyho}
If $S\subset\No^k$ is a simple set, then
$$
F_S(x_1,\dots,x_k)=\frac{r(x_1,\dots,x_k)}{(1-x_1)\dotsb (1-x_k)}
$$
where $r(x_1,\dots,x_k)$ is an integral polynomial. If $S\subset\No^k$
is a set such that 
$$
F_S(x_1,\dots,x_k)=\frac{r(x_1,\dots,x_k)}{(1-x_1)\dotsb (1-x_k)}
$$
where $r(x_1,\dots,x_k)$ is an integral polynomial, then $S$ is a simple set.
\end{theorem}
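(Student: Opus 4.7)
The plan is to handle the two implications separately; both come down to bookkeeping with generalized orthants. For the forward direction, I would write $S=O_1\cup\dotsb\cup O_m$ and expand $F_S$ by inclusion-exclusion as a signed sum of $F_{\bigcap_{j\in J}O_j}$ over nonempty $J\subseteq[m]$. By Lemma~\ref{prasjgenorth} each such intersection is again a generalized orthant (or empty), so it suffices to verify for a single nonempty generalized orthant $O_{s,I}$ the identity
\[
F_{O_{s,I}}(x)=\prod_{i\in I}x_i^{s_i}\cdot\prod_{i\notin I}\frac{x_i^{s_i}}{1-x_i}=\frac{\prod_{i\in I}x_i^{s_i}(1-x_i)\cdot\prod_{i\notin I}x_i^{s_i}}{(1-x_1)\dotsb(1-x_k)},
\]
whose numerator is an integral polynomial. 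Summing the inclusion-exclusion contributions over this common denominator yields the claimed form for $F_S$.

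For the converse, assume $F_S=r/((1-x_1)\dotsb(1-x_k))$ with $r$ an integral polynomial. I would apply Proposition~\ref{event_poly_right} (with every $e_i=1$) to extract a constant $c\in\N$ and polynomials $p_w\in\R[x_1,\dots,x_k]$ indexed by $w\in V(k,c)$ such that $[x_1^{n_1}\dotsb x_k^{n_k}]F_S=p_{w(n)}(n_1,\dots,n_k)$ for every $n\in\No^k$. Since the coefficients of $F_S$ lie in $\{0,1\}$, each $p_w$ takes only the values $0$ and $1$ on its type class $S_w=\{n\in\No^k:w(n)=w\}$.

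The decisive step is to upgrade this pointwise restriction to ``$p_w\equiv 0$ on $S_w$ or $p_w\equiv 1$ on $S_w$''. For $w$ with no $\infty$-entry, $S_w$ is a single point and there is nothing to prove. Otherwise the free coordinates of $S_w$ range over $(\Z_{>c})^m$ for some $m\ge 1$; regarded as a polynomial $\tilde p_w$ in these $m$ variables (with the frozen coordinates substituted), $p_w$ satisfies $\tilde p_w(\tilde p_w-1)\equiv 0$ on the Zariski-dense set $(\Z_{>c})^m$, hence vanishes as a polynomial, so by integrality of $\R[y_1,\dots,y_m]$ either $\tilde p_w\equiv 0$ or $\tilde p_w\equiv 1$. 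Consequently $S$ is the union of those $S_w$ with $p_w\equiv 1$, and each such $S_w$ is exactly the generalized orthant $O_{s,I'}$ where $I'=\{i:w_i\neq\infty\}$, $s_i=w_i$ for $i\in I'$, and $s_i=c+1$ for $i\notin I'$. This displays $S$ as a finite union of generalized orthants, i.e., as a simple set.

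I expect the Zariski-density step to be the main obstacle: it is what converts the analytic fact ``coefficients are eventually polynomial'' into the rigid combinatorial statement ``each type class lies wholly inside or wholly outside $S$'', and it is the true bridge between the generating-function characterization and the structural description of $S$. The rest is routine---an inclusion-exclusion over a finite family of orthants in one direction, and recognizing type classes as generalized orthants in the other.
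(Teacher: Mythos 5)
Your forward direction coincides with the paper's: inclusion--exclusion over the orthants, closure of intersections under Lemma~\ref{prasjgenorth}, and the explicit formula for $F_{O_{s,I}}$.

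For the converse your route is genuinely different, and it is correct, but it reaches for heavier machinery than the paper does. The paper's argument stays at the level of characteristic functions: writing $r=\sum_{s\in T}c_s x_1^{s_1}\dotsb x_k^{s_k}$, it observes that $\chi_S=\sum_{s\in T}c_s\chi_{O_s}$ with $O_s=O_{s,\emptyset}$ full orthants, partitions $\No^k$ into the at most $2^{|T|}$ cells $\bigcap_{s\in X}O_s\cap\bigcap_{s\in T\setminus X}(\No^k\setminus O_s)$ for $X\subset T$, notes that $\chi_S$ is constant on each cell (equal to $\sum_{s\in X}c_s$), and concludes---using only disjointness of the cells and the fact that $\chi_S$ is $\{0,1\}$-valued---that $S$ is the union of the cells where this constant is $1$; each cell is simple by Corollary~\ref{booleanalg}. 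Your argument instead invokes Proposition~\ref{event_poly_right} (whose proof internally performs essentially the same partial-fraction expansion), then applies a polynomial identity theorem to upgrade ``$p_w\in\{0,1\}$ pointwise on $S_w$'' to ``$p_w$ constant on $S_w$'', and finally identifies each type class $S_w$ as the generalized orthant $O_{s,I'}$ you describe. All of these steps are sound: $(\Z_{>c})^m$ is a product of infinite subsets of $\R$, so Zariski density holds, and $\tilde p_w(\tilde p_w-1)\equiv 0$ indeed forces constancy by integrality. The trade-offs are that your decomposition into $(c+2)^k$ type classes is finer than the paper's $2^{|T|}$ cells, and that your proof depends on Proposition~\ref{event_poly_right} and an identity-theorem argument, whereas the paper's is self-contained and purely set-theoretic. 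On the other hand, your version makes explicit how the strongly-eventually-polynomial structure and the $\{0,1\}$ constraint interact, which is a nice conceptual observation even if not strictly necessary here.
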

\begin{proof}
Suppose that $S\subset\No^k$ is simple and $S=O_1\cup\dotsb\cup O_r$ for some generalized 
orthants $O_i$. By the principle of inclusion and exclusion, $F_S(x_1,\dots,x_k)$ is a sum
of the $2^r$ terms $(-1)^{|X|}F_{O(X)}(x_1,\dots,x_k)$, $X\subset[r]$, where
$$
O(X)=\bigcap_{i\in X}O_i.
$$
By Lemma~\ref{prasjgenorth}, each $O(X)$ is again a generalized orthant. For a generalized orthant $O=O_{s,I}$, 
$$
F_O(x_1,\dots,x_k)=\frac{x_1^{s_1}\dotsb x_k^{s_k}}{\prod_{i\in [k]\backslash I}(1-x_i)}.
$$
The first claim follows.

Suppose that $S\subset\No^k$ and $F_S(x_1,\dots,x_k)=r/((1-x_1)\dotsb(1-x_k))$ where 
$r\in\Z[x_1,\dots,x_k]$. Hence $F_S(x_1,\dots,x_k)$ is an $l$-term integral linear combination 
$$
\sum_{s\in T}\frac{c_sx_1^{s_1}\dotsb x_k^{s_k}}{(1-x_1)\dotsb(1-x_k)}
$$
where $T\subset\No^k$, $|T|=l$, and $c_s\in\Z$. Every summand is in fact equal to 
$c_sF_{O_s}(x_1,\dots,x_k)$. The 
characteristic function of $S$ is an integral linear combination of the characteristic functions 
of the $l$ (full-dimensional) orthants $O_s=O_{s,\emptyset}$, $s\in T$. With $X$ running 
through the $2^l$ subsets of $T$, we partition $\No^k$ in the $2^l$ cells
$$
\bigcap_{s\in X}O_s\cap\bigcap_{s\in T\backslash X}\No^k\backslash O_s.
$$
The characteristic function of $S$ is an integral linear combination of the characteristic functions of these cells. Since the cells are pairwise disjoint, it follows that $S$ is 
a union of some of these cells. Each cell is a simple 
set by Corollary~\ref{booleanalg} and therefore $S$ is a simple set as well. 
\end{proof}

\begin{proof}[Proof of Theorem~\ref{iterimagemulti}] 
Let $X$, $B$, ${\cal F}$, and $P=\{P_1,\dots,P_l\}$ be as stated. Enlarging 
${\cal F}$ by repeating some mappings and enlarging $B$ by repeating some 
elements does not affect the set ${\cal F}^{(n_1,\dots,n_l)}(B)$. Therefore, 
we may assume that $|{\cal F}|=|B|=k$, ${\cal F}=\{f_1,\dots,f_k\}$ and 
$B=\{b_1,\dots,b_k\}$. We set $K=k^2$ and define a partial coloring
$$
\chi\colon\No^K=\No^{k^2}\to X\cup\{u\}
$$
as follows: the elements $x$ with $\chi(x)=u$ are regarded as ``uncolored''; for $i\in[k]$ and
$x\in\No^K$ such that $z_1:=x_{(i-1)k+1},\dots,z_k:=x_{(i-1)k+k}$ are positive but all other coordinates of $x$ are zero, we set
$$
\chi(x)=(f_1^{z_1-1}\circ\dots\circ f_k^{z_k-1})(b_i).
$$
Note that if $z_1=\dots=z_k=1$, then $\chi(x)=b_i$. We denote the set of all these points 
$x$ by $C_i$. The set of colored points is $C=C_1\cup\dots\cup C_k$. 
The points in $\No^K\backslash C$ are uncolored.
Each $C_i$ is a generalized orthant. If
$x\in C_i$ and $x'\in C_j$ for $i<j$, then $x$ and $x'$ are incomparable by $\le$ but 
$x'<_{lex}x$. For $x\in\No^K$ with all coordinates different from 
$(i-1)k+1,\dots,(i-1)k+k$ equal to zero (e.g., if $x\in C_i$) and $j\in[k]$, we define $x(j)$ by shifting the 
$k$-term block of possibly nonzero coordinate values to the coordinates 
$(j-1)k+1,\dots,(j-1)k+k$. The key property of $\chi$ is the following:
$$
\mbox{if $x,y\in C_i$, $x\le y$, $x'\in C_j$, and $\chi(x)=\chi(x')$, then 
$\chi(y)=\chi(x'+(y-x)(j))$.}
$$  
Indeed, if $\chi(x)=\chi(x')=c\in X$ and the coordinates $k(i-1)+1,\dots,k(i-1)+k$ of $y-x$ 
are $z_1,\dots,z_k$, then 
$\chi(y)=\chi(x+(y-x))=(f_1^{z_1}\circ\dots\circ f_k^{z_k})(c)=\chi(x'+(x-y)(j))$.

$P$ induces naturally a partition of $[K]$ into $l$ blocks which we again denote $P=\{P_1,\dots,P_l\}$: for $f_j\in P_r$ we put in the $P_r\subset[K]$ all $k$ elements 
$j,j+k,j+2k,\dots,j+(k-1)k$. Note that for $n_1,\dots,n_l\in\N$ we have (recall the definition 
of $\|x\|_P$ before the proof of Corollary~\ref{NRgf})
$$
|\chi(\{x\in\No^K\colon\|x\|_P=(n_1,\dots,n_l)\})\backslash\{u\}|=
|{\cal F}^{(n_1-1,\dots,n_l-1)}(B)|.
$$
We call a point $x\in\No^K$ $P$-\emph{substantial} if it is colored and is the lexicographically minimum element in the set 
$$
\{y\in\No^K\colon\chi(y)=\chi(x), \|y\|_P=\|x\|_P\}. 
$$
As before, $P$-substantial points are representatives of the nonempty intersections of the color 
classes of $\chi$ with the simplex $\|x\|_P=(n_1,\dots,n_l)$. Thus
$$
|{\cal F}^{(n_1-1,\dots,n_l-1)}(B)|=[y_1^{n_1}\dots y_1^{n_l}]G
$$
where $G(y_1,\dots,y_l)$ is obtained by the $P$-substitution from $F_S(x_1,\dots,x_K)$ and 
$S$ is the set of all $P$-substantial points in $\No^K$. Now the theorem 
follows as before by Proposition~\ref{event_poly_right}, Lemma~\ref{bsubst} and 
Theorem~\ref{zobstanleyho}, provided that we show that $S$ is a simple set.

To prove that $S$ is simple we consider the complement $\No^K\backslash S$. We have that
$$
\No^K\backslash S=(\No^K\backslash C)\cup C^*
$$
where $C^*$ consists of all colored points that are not $P$-substantial. The set 
$\No^K\backslash C$ is simple by Corollary~\ref{booleanalg} because $C$ is simple (as a union 
of the generalized orthants $C_i$). Now $C^*=C_1^*\cup\dots\cup C_k^*$ where $C_i^*=C^*\cap C_i$.
We show that each $C_i^*$ is an upper ideal in $(C_i,\le)$. Then, by Dickson's lemma, $C_i^*$ 
is a finite union of generalized orthants, which implies that $C_i^*$ and $C^*$ 
are simple. So $\No^K\backslash S$ is simple and $S$ is simple.

Thus suppose that $x\in C^*_i$ and $y\in C_i$ with $x\le y$. It follows that there is a colored 
point $x'\in\No^K$ with $\chi(x')=\chi(x)$,  $\|x'\|_P=\|x\|_P$, and $x'<_{lex}x$. Let 
$x'\in C_j$. Consider the point $y'=x'+(y-x)(j)$. By the property of $\chi$ we have 
$\chi(y')=\chi(y)$. Since $\|y-x\|_P=\|(y-x)(j)\|_P$ (by the definition of $P$), we have
$\|y'\|_P=\|x'+(y-x)(j)\|_P=\|x\|_P+\|(y-x)(j)\|_P=\|x\|_P+\|y-x\|_P=\|y\|_P$. If $i=j$, then 
$y-x=(y-x)(j)$ and $y'=x'+(y-x)<_{lex}x+(y-x)=y$. If $i\ne j$, we must have $i<j$ because 
$x'\in C_j$, $x\in C_i$, and $x'<_{lex}x$. But $y'\in C_j$ and $y\in C_i$, so again $y'<_{lex}y$. 
Thus $\chi(y')=\chi(y)$, $\|y'\|_P=\|y\|_P$, and $y'<_{lex}y$, which shows that $y\in C^*_i$. 
We have shown that $C_i^*$ is an upper ideal in $(C_i,\le)$, which concludes the proof. 
\end{proof}

\section{Concluding remarks}

In \cite{jeli_klaz_next}, we plan to look from general perspective at further polynomial and quasipolynomial classes of enumerative problems. A natural question, for 
example, is about the multivariate generalization of Theorem \ref{nasepoly}; generalization 
of Theorem \ref{mrbody} to several variables was considered by Beck \cite{beck99, beck02}. 
Theorem \ref{nasepoly} is related in spirit to results of Lison\v ek \cite{liso} who counts 
orbits of group actions on lattice points in polytopes. It would be interesting to have 
an explicit description of the structure of an additive coloring 
$\chi\colon\No^k\to X$ because one may consider further statistics of $\chi$ on the points 
$nP\cap\Z^k$, such as the number of occurrences of a specified color. We plan to investigate
polynomial classes arising from counting permutations (e.g., Albert, 
Atkinson and Brignall \cite{albe_atki_brig}, Huczynska and Vatter \cite{hucz_vatt}, 
Kaiser and Klazar \cite{kais_klaz}), graphs (e.g., Balogh, 
Bollob\'as and Morris \cite{balo_boll_morr}), relational structures (e.g., Pouzet and Thi\'ery 
\cite{pouz_thie}), and perhaps other.

\begin{thebibliography}{20}

\bibitem{albe_atki_brig}
M.\,H. Albert, M.\,D. Atkinson and R. Brignall: Permutation classes of polynomial 
growth, arXiv:math.CO/0603315.

\bibitem{balo_boll_morr}
J. Balogh, B. Bollob\'as and R. Morris: Hereditary properties of ordered graphs, 
in: M. Klazar, J. Kratochv\'\i l, M. Loebl, J. Matou\v sek, R. Thomas and P. Valtr 
(editors): {\em Topics in Discrete Mathematics. Dedicated to Jarik Ne\v set\v ril on 
the Occasion of his 60th Birthday}, Springer, Berlin, 2006, 179--213.

\bibitem{beck99}
M. Beck: A closer look at lattice points in rational simplices, 
\emph{Electronic J. Combin.} {\bf 6} (1999), article R37. 
\bibitem{beck02}
M. Beck: Multidimensional Ehrhart reciprocity, \emph{J. Combin. Theory, Ser. A}
{\bf 97} (2002), 187--194. 

\bibitem{ehrh}
E. Ehrhart: Sur les poly\`edres rationnels homoth\'etiques \`a $n$ dimensions, \emph{C. R. Acad. 
Sci. Paris} {\bf 254} (1962), 616--618.

\bibitem{hucz_vatt}
S. Huczynska and V. Vatter: Grid classes and the Fibonacci dichotomy for restricted permutations,
{\em Electron. J. Combin.} {\bf 13} (2006), article R54, 14 pages.

\bibitem{jeli_klaz_next}
V. Jel\'\i nek and M. Klazar: Polynomial and quasipolynomial counting, in preparation.

\bibitem{kais_klaz}
 T. Kaiser and M. Klazar: On growth rates of closed permutation classes,
{\em Electron. J. Combin.} {\bf 9(2)} (2003), article R10, 20 pages.  
\bibitem{khov1}
A.\,G. Khovanski\u\i: Newton polyhedron, Hilbert polynomial, and sums of finite sets, {\em Functional. Anal. Appl.} 
{\bf 26} (1992), 276--281.
\bibitem{khov2}
A.\,G. Khovanski\u\i: Sums of finite sets, orbits of commutative semigroups, and Hilbert functions,  
{\em Functional. Anal. Appl.} {\bf 29} (1995), 102--112.

\bibitem{krus}
J.\,B. Kruskal: The theory of well-quasi-ordering: A frequently discovered concept, {\em 
J. Combinatorial Theory Ser. A} {13} (1972), 297--305.

\bibitem{liso}
P. Lison\v ek: Combinatorial families enumerated by quasi-polynomials, 
\emph{J. Combin. Theory, Ser. A}, {\bf 114} (2007), 619--630.

\bibitem{macd63}
I.\,G. Macdonald: The volume of a lattice polyhedron, {\em Proc. Camb. Phil. 
Soc.} {\bf 59} (1963), 719--726.

\bibitem{macd71}
I.\,G. Macdonald: Polynomials associated with finite cell complexes, {\em 
J. London Math. Soc. (2)} {\bf 4} (1971), 181--192.

\bibitem{mato}
J. Matou\v sek: {\em Lectures on Discrete Geometry}, Springer, 2002.

\bibitem{nath}
M.\,B. Nathanson: Growth of sumsets in abelian semigroups, {\em Semigroup Forum}
{\bf 61} (2000), 149--153.
\bibitem{nath_ruzs}
M.\,B. Nathanson and I. Ruzsa: Polynomial growth of sumsets in abelian 
semigroups, {\em  J. Th\'eor. Nombres Bordeaux} {\bf 14} (2002), 553--560.
\bibitem{pouz_thie}
M. Pouzet and N.\,M. Thi\'ery: Some relational structures with 
polynomial growth and their associated algebras, arXiv:math.CO/0601256.

\bibitem{stan7576}
R.\,P. Stanley: Problem E2546, \emph{Amer. Math. Monthly} {\bf 82} (1975), 756; 
solution \emph{ibidem} {\bf 83} (1976), 813--814.
             
\bibitem{stanEC1}
R.\,P. Stanley: \emph{Enumerative Combinatorics. Volume 1}, 
Cambridge University Press, 2002.
\end{thebibliography}
\end{document}